\theoremstyle{plain}
  \newtheorem{thm}{Theorem}[section]
  \newtheorem{lem}[thm]{Lemma}
  \newtheorem{prop}[thm]{Proposition}
  \newtheorem{cor}[thm]{Corollary}
\theoremstyle{definition}
  \newtheorem{defn}[thm]{Definition}
  \newtheorem{exmp}[thm]{Example}
  \newtheorem{rem}[thm]{Remark}
  \newtheorem{prob}[thm]{Problem}
\DeclareMathAlphabet{\mathcal}{OMS}{cmsy}{m}{n}
\def\ps@pprintTitle{%
 \let\@oddhead\@empty
 \let\@evenhead\@empty
 \def\@oddfoot{\centerline{\thepage}}%
 \let\@evenfoot\@oddfoot}
\newcommand{\ra}{\rightarrow}
\newcommand{\lra}{\longrightarrow}
\newcommand{\CL}{\mathcal{L}}
\newcommand{\CT}{\mathcal{T}}
\newcommand{\bbN}{\mathbb{N}}
\renewcommand{\leq}{\leqslant}
\renewcommand{\geq}{\geqslant}
\renewcommand{\phi}{\varphi}
\newcommand{\vep}{\varepsilon}
\newcommand{\tauTL}{\tau_{_{T,L}}}
\newcommand{\Delp}{\Delta^+}
\newcommand{\tauT}{\tau_{_{T}}}
\numberwithin{equation}{section}
\begin{document}

\begin{frontmatter}



\title{When is the operation $\tau_{_{T,L}}$ a triangle function on $\Delp$?}


\author{Hongliang Lai}
\ead{hllai@scu.edu.cn}

\author{Mengyu Luo}\ead{lmysrem@qq.com}

\author{Jie Zhang \corref{cor}}
\ead{zjandzxz@qq.com}

\cortext[cor]{Corresponding author.}
\address{School of Mathematics, Sichuan University, Chengdu 610064, China}

\begin{abstract}
This paper resolves an open problem posed by Schweizer and Sklar in 1983. We establish that the binary operation $\tauTL$ is a triangle function on $\Delp$ if and only if the following three conditions hold: (a) $L$ is a continuous t-conorm on $[0, \infty]$ satisfying $(LCS)$; (b) $T$ is a t-norm on $[0, 1]$; and (c) $T$ is weakly left continuous, with left continuity required when $L$ is non-Archimedean.
\end{abstract}

\begin{keyword}
   distance distribution function \sep triangle function \sep  triangular norm\sep triangular conorm 

\end{keyword}

\end{frontmatter}




\section{Introduction}
The notion of probabilistic metric spaces was introduced by Menger \cite{Menger1942} in 1942 as a generalization of classical metric spaces. In this framework, the distance between two points is described by a distribution function rather than a deterministic real number. To model the “addition” of distances in the interval $[0, \infty]$, \v{S}erstnev \cite{Serstnev1964} later introduced the concept of triangle functions, refining the triangle inequality originally proposed by Menger and Wald \cite{Wald1943}.

Motivated by \v{S}erstnev’s formulation, Schweizer and Sklar \cite{Schweizer1983} studied a family of operations denoted by $\tau_{T,L}$, constructed from a binary operation $T$ on $[0,1]$ and a binary operation $L$ on $[0,\infty]$. They established sufficient conditions under which $\tau_{T,L}$ becomes a (continuous) triangle function. However, by presenting a counterexample—the triangle function $\tau_{_D}$—they also demonstrated that these conditions are not necessary. This led them to pose the following open problem (see 7.9.6 in \cite{Schweizer1983}): characterize all pairs $(T, L)$ for which $\tau_{T,L}$ is a (continuous) triangle function.

In 1992, Ying \cite{Ying1992} derived a set of necessary conditions that closely approximate the known sufficient ones, yet the sufficiency of those conditions remained open. In this paper, we completely resolve this long-standing problem. We prove that $\tau_{T,L}$ is a triangle function on $\Delta^+$ if and only if the following three conditions are satisfied:
\begin{itemize}
\item[(a)] $L$ is a continuous t-conorm on $[0, \infty]$ satisfying the~(LCS) condition;
\item[(b)] $T$ is a t-norm on $[0, 1]$;
\item[(c)] $T$ is weakly left continuous, and left continuous whenever $L$ is non-Archimedean.
\end{itemize}

While the necessity of conditions (a)–(c) was already established by Ying \cite{Ying1992}, the sufficiency part—especially the associativity of $\tau_{T,L}$—presents a nontrivial difficulty. Specifically, when $L$ is Archimedean and $T$ is only weakly left continuous, the argument used in Theorem 7.2.4 of \cite{Schweizer1983} no longer applies. By introducing a subtle technical argument, we successfully establish the associativity of $\tau_{T,L}$ under the weakened continuity condition (see Lemma~\ref{Lem A} and Proposition~\ref{suff proo}), thereby completing the sufficiency proof.

The rest of this paper is organized as follows. In Section~2, we review basic concepts and results on t-norms, t-conorms, and triangle functions, mainly following \cite{Klement2000,Schweizer1983}, and give a precise statement of Schweizer and Sklar’s open problem. Section~3 is devoted to the proof of our main results.

\section{Preliminaries}
The terminology and notation used in this paper are primarily drawn from the literature \cite{Schweizer1983}.

\begin{defn}(\cite{Schweizer1983})
A \emph{distance distribution function}  is a map $\phi\colon [0,\infty]\to[0,1]$ such that
 $\phi(0)=0$, $\phi(\infty)=1$,
and $\phi$ is increasing and left-continuous on $]0,\infty[$. Let $\Delta^+$ denote the set of all distance distribution functions. The partial order of $\Delta^+$ is defined pointwisely.
\end{defn}

Among distance distribution functions,  the following will be needed frequently.
\begin{itemize}
\item For each $r<\infty$, define
$\vep_r(x)=\begin{cases}
            0,& 0\leq x\leq r;\\
            1,& r<x.
        \end{cases}
$
\item For $r=\infty$, define
        $\vep_\infty(x)=\begin{cases}
            0,& 0\leq x<\infty;\\
            1,& x=\infty.
        \end{cases}
$

\item For each $p\in[0,1]$, define 
$ V_p(x)=\begin{cases} 0, & x=0;\\
                        p, & 0<x<\infty;\\
                        1, & x=\infty.
            \end{cases}
$
\end{itemize}

The mappings $r\mapsto\vep_r$ and $p\mapsto V_p$ define embeddings into $\Delp$ with nice order-theoretic properties:
\begin{enumerate}
\item[(1)] The map $r\mapsto\vep_r$ sends the extended interval $[0,\infty]$ into $\Delp$. It converts suprema in $[0,\infty]$ into infima in $\Delp$, and infima in $[0,\infty]$ into suprema in $\Delp$. 
\item[(2)] The map $p\mapsto V_p$ sends the unit interval $[0,1]$ into $\Delp$. It preserves all suprema and infima.
\end{enumerate}
In particular, the endpoints satisfy $\vep_0=V_1$ and $\vep_\infty=V_0$.

\begin{defn}(\cite{Schweizer1983}) A \emph{triangle function} $\tau$ is a binary operation on $\Delta^+$ that is commutative, associative, increasing in each place and has $\vep_0$ as identity.
\end{defn}

\begin{defn}(\cite{Schweizer1983})
The class $\CT$ is the set of all binary operations $T$ on $[0,1]$ that is increasing in each place and  has $1$ as identity. A \emph{triangular norm} (briefly, a \emph{t-norm}) is   a binary operation in $\CT$ that is commutative and associative. 
\end{defn}
Let $T$ be a binary operation in $\CT$.  $T$ is \emph{continuous} if it is a continuous binary function on $[0,1]^2$.   $T$ is \emph{left continuous} if 
$$T(x,y)=\sup\{T(u,v)|u<x,v<y\}$$
for all $x, y\in]0,1]$. $T$  is \emph{weakly left continuous} if 
$$T(x,y)=\sup\{T(u,v)|u\leq x,v<y\text{ or } u<x, v\leq y\}$$
for all $x, y\in]0,1]$. 

There are some basic t-norms on the unit interval $[0,1]$ in the following.
\begin{itemize}
\item  Minimum: $M(x,y)=\min\{x,y\}$;
\item  Product: $\Pi(x,y)=x\cdot y$; 
\item  {\L}ukasiewicz t-norm: $W(x,y) =\max\{x+y-1, 0\}$;
\item  Nilpotent minimum:
${{nM}}(x,y)=\begin{cases}\min\{x,y\}, & 1<x+y,\\
                       0, &           x+y\leq 1;
          \end{cases}$
\item Drastic product: 
$D(x,y)=\begin{cases}\min\{x,y\}, & x=1\text{ or } y=1,\\
                     0, &          \text{otherwise}.
        \end{cases}$
\end{itemize}

The drastic product $D$ is the minimal t-norm and the minimum $M$ is the maximal t-norm. In fact, for any binary operation $T\in\CT$,  it always holds that
$$D\leq T\leq M.$$

The three t-norms $M$, $\Pi$ and $W$ are continuous. The t-norm $nM$ is left continuous but not continuous.  The  t-norm $D$ is weakly left continuous but not left continuous. By changing the values of the triangular norm $nM$ on its anti-diagonal, we obtain the following t-norm:  
$$\widehat{nM}(x,y)
=\begin{cases} \min\{x,y\}, & 1\leq x+y,\\
                         0, & x+y<1.
\end{cases}
$$
Then $\widehat{nM}$ is not weakly left continuous at the point $(0.5,0.5)$.

\begin{defn} (\cite{Schweizer1983})
The class $\CL$ is the set of all binary operations $L$ on $[0,\infty]$ that satisfy the conditions:
(L1) $L$ is onto, (L2) $L$ is increasing in each place, and (L3) $L$ is continuous except possibly at the points $(0,\infty)$ and $(\infty,0)$.
\end{defn}

\begin{prop} \label{L0 implies cont} If an operation $L\in\CL$ has $0$ as identity, then it is continuous. 
\end{prop}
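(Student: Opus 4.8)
The plan is to reduce continuity of $L$ everywhere on $[0,\infty]^2$ to continuity at the two exceptional points permitted by (L3), and then to dispose of those points using the interplay between monotonicity and the identity law. The key preliminary observation is the lower bound
\[
L(x,y)\geq\max\{x,y\}\qquad\text{for all }x,y\in[0,\infty].
\]
This follows at once from the hypotheses: since $0$ is an identity we have $L(x,0)=x$ and $L(0,y)=y$, and since $L$ is increasing in each place by (L2), we get $L(x,y)\geq L(x,0)=x$ and $L(x,y)\geq L(0,y)=y$.

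By (L3), $L$ is already continuous at every point of $[0,\infty]^2$ other than $(0,\infty)$ and $(\infty,0)$, so only these two require attention; moreover the identity law forces $L(0,\infty)=\infty$ and $L(\infty,0)=\infty$. For continuity at $(0,\infty)$ I would argue directly in the order topology on $[0,\infty]$: given a basic neighbourhood $(a,\infty]$ of the value $\infty$, the lower bound gives $L(x,y)\geq y>a$ for every $(x,y)$ lying in the neighbourhood $[0,\infty]\times(a,\infty]$ of $(0,\infty)$, so $L$ carries this whole neighbourhood into $(a,\infty]$. This is precisely continuity at $(0,\infty)$, and the point $(\infty,0)$ is treated symmetrically using $L(x,y)\geq x$.

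I expect no genuine obstacle here; the only point requiring a little care is the behaviour near $\infty$ on the extended half-line, where one must work with neighbourhoods of the form $(a,\infty]$ rather than with ordinary two-sided limits, and observe that the lower bound $L(x,y)\geq\max\{x,y\}$ already forces the value of $L$ to be large whenever one coordinate is large. It is worth remarking that condition (L1) is not actually needed for this particular statement: monotonicity (L2) together with the identity law, feeding into the restriction supplied by (L3), carries the entire proof.
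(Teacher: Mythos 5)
Your proof is correct and takes essentially the same route as the paper's: both reduce continuity to the two exceptional points $(0,\infty)$ and $(\infty,0)$ permitted by (L3), and use monotonicity together with the identity law to force the value and the limit $\infty$ there. Your explicit lower bound $L(x,y)\geq\max\{x,y\}$ and the neighbourhood argument in the order topology merely spell out the limits the paper asserts in one line, and your side remark that (L1) is not needed is accurate.
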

\begin{proof}
Since $L$ is increasing in each variable and has $0$ as identity, we observe that
$$\lim_{x\ra 0\atop y\ra\infty}L(x,y)=\infty=L(0,\infty),\quad \lim_{x\ra\infty\atop y\ra 0}L(x,y)=\infty=L(\infty,0).$$
This shows that $L$ is continuous at the points $(0,\infty)$ and $(\infty,0)$. Since $L$ is already continuous at other points, we conclude that $L$ is continuous on the entire domain $[0,\infty]^2$. 
\end{proof}

 A binary operation $L\in\CL$ is said to be \emph{strictly increasing}(see \cite{Saminger2008,Schweizer1983}) if 
\[ u<u',v<v'\implies L(u,v)<L(u',v').\leqno{(LS)}\]
It is said to be \emph{conditionally strictly increasing}(see \cite{Ying1992}) if  
\begin{equation*}
u<u',v<v', L(u',v')<\infty \implies L(u,v)<L(u',v').\leqno{(LCS)}\end{equation*}

A \emph{triangular conorm} (shortly, a \emph{t-conorm}) on $[0,\infty]$ is an associative binary operation $L$ on $[0,\infty]$ that  is   commutative, increasing in each place, and has $0$ as identity. A t-conorm $L$ is   \emph{continuous} if $L$ is a continuous function with respect to the usual topology.

There is a one-to-one correspondence between t-norms on $[0,1]$ and t-conorms on $[0,\infty]$ by an order isomorphism $\phi: ([0,1],\leq)\lra ([0,\infty],\geq)$. For example, let $\phi(x)=-\ln x$ for all $x\in[0,1]$. Then for a t-norm $T$ on $[0,1]$, the corresponding t-conorm $T^*$ is given by 
$$T^*(x,y)=-\ln T(\mathrm{e}^{-x},\mathrm{e}^{-y}).$$
The corresponding t-conorms of  $M$, $\Pi$, $W$ and $D$ are listed as below:
\begin{itemize}
\item $M^*(x,y)=\max\{x,y\}$.
\item $\Pi^*(x,y)=x+y$.
\item $W^*(x,y)=-\ln(1-\min\{\mathrm{e}^{-x}+\mathrm{e}^{-y},1\})$. 
\item $D^*(x,y)=\begin{cases}\max\{x,y\},& x=0\text{ or } y=0;\\
                            \infty, & \text{otherwise}.
                            
\end{cases}$
\end{itemize}

Among all t-conorms on $[0,\infty]$, $M^*$ is the smallest and $D^*$ is the largest. That is, for any t-conorm $L$, it always holds that
$$M^*\leq L\leq D^*.$$
All the t-conorms $M^*$, $\Pi^*$ and $W^*$ satisfy the $(LCS)$ condition, but $W^*$ does not satisfy the $(LS)$ condition.  

An element $x$ in a set $X$ is called an idempotent element of a binary operation $S$ on $X$ if $S(x,x)=x$.
For a t-norm $T$, both $0$ and $1$ are idempotent elements, called the trivial idempotent elements. Similarly, $0$ and $\infty$ are called the trivial idempotent elements of a t-cornorm $L$ on $[0,\infty]$.

Clearly, all idempotent elements of a continuous t-norm (or t-conorm) form a closed subset of $[0,1]$ (or $[0,\infty]$). Thus, for any non-idempotent element $x\in]0,1[$ (or $x\in]0,\infty[$), among the idempotent elements below it, there exists a largest one, denoted as $a_x$; among the idempotent elements above it, there exists a smallest one, denoted as $b_x$. Restrict the binary operation $T$ on $[a_x,b_x]$, we obtain an ordered monoid, written as $([a_x,b_x],T)$. 

The following conclusion is of fundamental importance in the theory of continuous t-norms.

\begin{prop} {\rm (\cite{Alsina2006,Klement2000})} Let $T$ be a continuous t-norm on $[0, 1]$. If $x\in]0, 1[$ is non-idempotent, then the ordered monoid $([a_x,b_x],T)$ is isomorphic to either $([0,1],\Pi)$ or $([0,1],W)$.   In particular, if $T$ has no non-trivial idempotent elements, then $([0,1],T)$ is isomorphic to either $([0,1],\Pi)$ or $([0,1],W)$.
\end{prop}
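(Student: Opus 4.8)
The plan is to reduce the statement to its final ``in particular'' clause, and then to prove that clause by the classical representation of continuous Archimedean t-norms through additive generators.

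\emph{Reduction to the Archimedean case.} Since $a_x$ and $b_x$ are idempotent, a one-line monotonicity estimate shows that $[a_x,b_x]$ is closed under $T$: for $u,v\in[a_x,b_x]$ one has $a_x=T(a_x,a_x)\leq T(u,v)\leq T(b_x,b_x)=b_x$. A standard structural fact about continuous t-norms is that each idempotent $e$ acts by $T(e,u)=\min\{e,u\}$ (see \cite{Klement2000}); consequently, on $[a_x,b_x]$ the point $b_x$ is an identity and $a_x$ an annihilator. Transporting $T$ along the order isomorphism $[a_x,b_x]\cong[0,1]$ then produces a continuous t-norm, and by the maximality of $a_x$ and the minimality of $b_x$ there is no idempotent of $T$ in $]a_x,b_x[$, so this transported t-norm has only the two trivial idempotents. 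It therefore suffices to prove the claim when $T$ itself has no non-trivial idempotents.

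\emph{Archimedean property and cancellativity.} Writing $x^{(n)}$ for the $n$-fold $T$-power of $x$, the sequence $(x^{(n)})_n$ is decreasing and hence converges to some $a$; continuity gives $T(a,a)=a$, and as $a\leq x<1$ the hypothesis forces $a=0$. This Archimedean behaviour, combined with continuity, yields cancellativity wherever the product is positive, that is, $T(u,v)=T(u,w)>0$ implies $v=w$; in particular positive $T$-roots are unique.

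\emph{Construction of an additive generator.} The heart of the argument---and the step I expect to be the main obstacle---is the production of a continuous, strictly decreasing map $t\colon[0,1]\to[0,\infty]$ with $t(1)=0$ and $T(u,v)=t^{(-1)}(t(u)+t(v))$, where $t^{(-1)}$ is the pseudo-inverse. I would construct $t$ by the one-parameter-semigroup method: fix $a\in\,]0,1[$, use the intermediate value theorem to solve $T(y,y)=z$ and thereby extract the iterated $T$-square roots $r_n$ (the $2^n$-th $T$-root of $a$), and set $g(m/2^n)=r_n^{(m)}$. The uniqueness of positive roots makes this assignment well defined and order-reversing on the dyadic rationals, and uniform continuity lets $g$ extend to a continuous map whose pseudo-inverse is the desired generator $t$. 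This is exactly the Mostert--Shields/Acz\'el representation theorem, which I would either carry out in full or invoke directly from \cite{Klement2000,Alsina2006}.

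\emph{Reading off the dichotomy.} The two alternatives are then separated by the value $t(0)$. If $t(0)=\infty$ the t-norm is strict and $u\mapsto\mathrm{e}^{-t(u)}$ is an isomorphism $([0,1],T)\cong([0,1],\Pi)$; if $t(0)<\infty$ it is nilpotent and $u\mapsto 1-t(u)/t(0)$ is an isomorphism $([0,1],T)\cong([0,1],W)$. Combined with the reduction of the first step, this proves the proposition in full.
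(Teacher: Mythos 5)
The paper offers no proof of this proposition: it is stated as a classical result and attributed to \cite{Alsina2006,Klement2000}. Your proposal is precisely the standard argument from those sources---reduction to the Archimedean case via the $\min$-action of idempotents on a continuous t-norm, conditional cancellativity, the Mostert--Shields/Acz\'el additive-generator construction, and the strict/nilpotent dichotomy read off from whether $t(0)=\infty$---so it is correct in outline and coincides with the (cited) approach the paper relies on.
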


Dually, for continuous t-conorms on $[0,\infty]$, we have the following: 
\begin{prop} Let $L$ be a continuous t-conorm on $[0, \infty]$. If $x\in]0, \infty[$ is non-idempotent, then    $([a_x,b_x],L)$ is isomorphic to either $([0,\infty],+)$ or $([0,\infty],W^*)$.   In particular, if $L$ has no non-trivial idempotent elements, then $([0,\infty],L)$ is either isomorphic to $([0,\infty],+)$ or $([0,\infty],W^*)$.
\end{prop}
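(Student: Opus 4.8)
The plan is to deduce this proposition from its t-norm counterpart (the preceding proposition on continuous t-norms) by transporting everything across the order anti-isomorphism $\phi\colon([0,1],\leq)\to([0,\infty],\geq)$, $\phi(x)=-\ln x$, described above. Recall that $\phi$ is simultaneously a homeomorphism for the usual topologies and an order-reversing bijection, and that it induces the correspondence $T\mapsto T^*$ between t-norms on $[0,1]$ and t-conorms on $[0,\infty]$ via $T^*(x,y)=\phi(T(\phi^{-1}x,\phi^{-1}y))$, under which $\Pi^*$ is ordinary addition $+$ and $W^*$ is the {\L}ukasiewicz t-conorm. The key bookkeeping fact is that this correspondence preserves idempotency: $z$ is idempotent for $L$ if and only if $\phi^{-1}(z)$ is idempotent for the t-norm $T$ constructed below.

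First I would, given the continuous t-conorm $L$, define $T(u,v)=\phi^{-1}\bigl(L(\phi u,\phi v)\bigr)=\mathrm{e}^{-L(-\ln u,-\ln v)}$. Since $\phi$ is an order anti-isomorphism and a homeomorphism, $T$ is a continuous t-norm with $T^*=L$. Fix a non-idempotent $x\in\,]0,\infty[$ and set $u=\phi^{-1}(x)\in\,]0,1[$, which is non-idempotent for $T$. Because $\phi$ reverses order, it interchanges ``largest idempotent below'' with ``smallest idempotent above'': concretely $\phi^{-1}(a_x)=b_u$ and $\phi^{-1}(b_x)=a_u$, so $\phi$ restricts to an order anti-isomorphism of $[a_u,b_u]$ onto $[a_x,b_x]$. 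Applying the preceding proposition to $T$ yields an ordered-monoid isomorphism $\psi\colon([a_u,b_u],T)\to([0,1],\Pi)$ or $\psi\colon([a_u,b_u],T)\to([0,1],W)$.

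It remains to transport $\psi$ through $\phi$. I would set $\eta=\phi\circ\psi\circ\phi^{-1}\colon[a_x,b_x]\to[0,\infty]$ and check that $\eta$ is an ordered-monoid isomorphism onto $([0,\infty],+)$ (respectively $([0,\infty],W^*)$). Order-preservation is automatic because $\eta$ is a composite of two order-reversing maps and one order-preserving map; and a short computation using $L=T^*$ and $+=\Pi^*$ (resp.\ $W^*$) together with $\psi(T(a,b))=\Pi(\psi a,\psi b)$ shows $\eta(L(s,t))=\eta(s)+\eta(t)$ on $[a_x,b_x]$. Finally, if $L$ has no non-trivial idempotents then $a_x=0$ and $b_x=\infty$ for every $x\in\,]0,\infty[$, so $[a_x,b_x]=[0,\infty]$ and the local statement becomes the global one. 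The only delicate point is the bookkeeping around the order reversal---keeping straight that $\phi$ swaps the roles of $a_x$ and $b_x$, and that the two reversals coming from $\phi$ and $\phi^{-1}$ cancel so that the transported map $\eta$ genuinely preserves both the order and the monoid operation; once this is organised, the classification of continuous t-norms does all the real work.
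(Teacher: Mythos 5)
Your proof is correct and takes essentially the same route as the paper, which states this proposition without proof as the dual---under the order anti-isomorphism $\phi(x)=-\ln x$ introduced just before---of the cited classification of continuous t-norms. Your writeup merely makes explicit the transport-of-structure bookkeeping (idempotents correspond, $\phi^{-1}(a_x)=b_u$ and $\phi^{-1}(b_x)=a_u$, and the two order reversals cancel) that the paper's ``Dually'' leaves implicit.
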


A continuous t-conorm $L$ has no non-trivial idempotent elements if and only if it is \emph{Archimedean}. That is, for all $x,y\in]0,\infty[$ with $y<x$, there exists an $n\in\bbN$ such that the n-th power of $y$ under $L$, written as $y^n$, satisfies $x<y^n$.

 Let $x\oplus y=\min\{x+y,1\}$ for all $x,y\in[0,1]$. Then binary operation $\oplus$ is called the \emph{truncated addition}. The ordered monoid $([0,1],\oplus)$ is isomorphic to $([0,\infty],W^*)$. The isomorphism is given by the map $$\phi:[0,1]\lra[0,\infty], \phi(t)=-\ln(1-t).$$ 

\begin{cor}
A continuous Archimedean t-conorm $L$ on $[0,\infty]$ is isomorphic to either the addition $+$ on $[0,\infty]$  or  the truncated addition $\oplus$ on $[0,1]$. Thus,  each continuous Archimedean t-conorm $L$ on $[0,1]$ satisfies condition $(LCS)$ but it satisfies condition $(LS)$ if and only if it is isomorphic to the addition $+$.
\end{cor}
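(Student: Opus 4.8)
The plan is to reduce the corollary to the classification of continuous t-conorms recorded just above, and then to observe that the two monotonicity conditions $(LS)$ and $(LCS)$ are invariant under isomorphism of t-conorms. First I would prove the dichotomy. Since $L$ is a continuous t-conorm, being Archimedean is equivalent to having no non-trivial idempotent elements, as noted in the preceding paragraph. Hence the ``in particular'' clause of the classification of continuous t-conorms applies, and $([0,\infty],L)$ is isomorphic either to $([0,\infty],+)$ or to $([0,\infty],W^*)$. In the second case I would post-compose with the order isomorphism $\phi\colon[0,1]\lra[0,\infty]$, $\phi(t)=-\ln(1-t)$, which identifies $([0,1],\oplus)$ with $([0,\infty],W^*)$, to conclude that $L$ is isomorphic to the truncated addition $\oplus$ on $[0,1]$. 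This establishes the first assertion.

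Next I would record the structural fact that both $(LS)$ and $(LCS)$ transport along any isomorphism $h$ of ordered monoids between two t-conorms. For $(LS)$ this is immediate: $h$ is simultaneously an order isomorphism and a homomorphism, so the hypotheses $u<u'$, $v<v'$ and the conclusion $L(u,v)<L(u',v')$ all carry over verbatim. For $(LCS)$ the only additional point is the clause $L(u',v')<\infty$; here I would use that an order isomorphism of the bounded chain $[0,\infty]$ sends the top element to the top element, so $L(u',v')<\infty$ holds on one side precisely when the corresponding value lies strictly below the top on the other side. Keeping careful track of this top element is the place that needs the most attention, since $(LCS)$ is the only one of the two conditions that refers to $\infty$ explicitly; once it is handled, $(LCS)$ transports just as $(LS)$ does.

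Finally I would read off the ``Thus'' part from the dichotomy together with the facts already recorded in this section: $\Pi^*=+$ and $W^*$ both satisfy $(LCS)$, while $+$ is strictly increasing on $[0,\infty]$ and $W^*$ (equivalently $\oplus$) fails $(LS)$, being constantly $\infty$ on a region near the origin (equivalently, $\oplus$ is constantly $1$ on $\{a+b\geq 1\}$). By the invariance just established, $L$ satisfies $(LCS)$ in either case of the dichotomy, giving the first half. For the second half: if $L\cong +$ then $L$ satisfies $(LS)$ since $+$ is strictly increasing; conversely, if $L$ satisfies $(LS)$ then $L$ cannot be isomorphic to $W^*$, for otherwise $L$ would inherit the failure of $(LS)$, so by the dichotomy $L\cong +$. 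This completes the characterization, and I expect no genuine obstacle beyond the top-element bookkeeping in the invariance of $(LCS)$.
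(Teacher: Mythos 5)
Your proposal is correct and takes essentially the paper's route: the paper offers no separate proof of this corollary, treating it as an immediate consequence of the preceding classification proposition (together with the stated equivalence of ``Archimedean'' with the absence of non-trivial idempotents and the isomorphism $([0,1],\oplus)\cong([0,\infty],W^*)$) and the recorded facts that $+$ and $W^*$ satisfy $(LCS)$ while $W^*$ fails $(LS)$; your explicit verification that $(LS)$ and $(LCS)$ transport along ordered-monoid isomorphisms, including the top-element bookkeeping for $(LCS)$, merely spells out the routine details the paper leaves implicit. One immaterial slip: the region where $W^*$ is constantly $\infty$ is $\{(x,y): \mathrm{e}^{-x}+\mathrm{e}^{-y}\leq 1\}$, which lies away from the origin (your parenthetical picture of $\oplus$ being constantly $1$ on $\{a+b\geq 1\}$ is the accurate one; the paper's displayed formula for $W^*$ contains a typo suggesting otherwise), but the failure of $(LS)$ holds regardless.
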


\begin{cor}
Let $L$ be a continuous t-conorm.  It satisfies the $(LS)$ condition if and only if for all non-idempotent element $x\in]0,\infty[$,  $([a_x,b_x],L)$ is isomorphic to $([0,\infty],+)$.
 It satisfies the $(LCS)$ condition if and only if for all non-idempotent element $x\in]0,\infty[$ with $b_x<\infty$,  $([a_x,b_x],L)$ is isomorphic to $([0,\infty],+)$.
\end{cor}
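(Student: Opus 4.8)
The plan is to reduce both equivalences to the Archimedean summands $([a_x,b_x],L)$, exploiting that $L$ acts as $\max$ across distinct summands while, inside a summand, it is governed by the previous Proposition (each $([a_x,b_x],L)$ is isomorphic to $([0,\infty],+)$ or to $([0,\infty],W^*)$). The starting point is a localization lemma: for $s\leq t$, if some idempotent $e$ satisfies $s\leq e\leq t$, then $L(s,t)=\max\{s,t\}$. Indeed $L(s,t)\leq L(e,t)\leq L(t,t)=t$ while $L(s,t)\geq t$, using monotonicity together with $L(e,t)\geq\max\{e,t\}=t$. Contrapositively, whenever $L(s,t)>\max\{s,t\}$ there is no idempotent in $[s,t]$, so $s$ and $t$ lie in the interior of a common summand $]a_x,b_x[$.

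Next I record the two summand behaviours, transported from the explicit forms of $+$ and of the truncated addition $\oplus$ through the isomorphisms supplied by the Proposition. If $([a_x,b_x],L)\cong([0,\infty],+)$, then $L$ is strictly increasing on $[a_x,b_x]$ and attains the top $b_x$ only when one argument equals $b_x$. If $([a_x,b_x],L)\cong([0,\infty],W^*)$ (equivalently $\cong([0,1],\oplus)$), then every failure of strict monotonicity inside the summand occurs exactly at the output value $b_x$, realised by pairs both of whose arguments are strictly below $b_x$. These two descriptions are the only two possibilities for a summand, and they are what separates the $(LS)$ and $(LCS)$ behaviour.

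For necessity I argue by contraposition. If some summand $[a_x,b_x]$ is of $W^*$-type, the second description yields $u<u'$ and $v<v'$ in $[a_x,b_x]$ with $L(u,v)=L(u',v')=b_x$; viewing these as genuine points of $[0,\infty]$, this already violates $(LS)$. If moreover $b_x<\infty$, the common value $b_x$ is finite, so the same witnesses violate $(LCS)$. This gives the ``only if'' halves of both statements.

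For sufficiency I again argue by contraposition, and the step I expect to be the real obstacle is showing that any failure of monotonicity localizes to a single summand, at its top. Suppose $u<u'$ and $v<v'$ with $L(u,v)=L(u',v')=:w$; squeezing by monotonicity forces $L(u',v)=L(u,v')=w$ as well. A short case analysis according to whether $w=\max\{u',v'\}$ or $w>\max\{u',v'\}$, combined with the localization lemma (to put the relevant pair into a common summand) and the cross-summand identity $L=\max$ (to rule out a failure straddling two summands, since $\max$ itself is strictly increasing in the sense of $(LS)$), shows that the four points lie in one summand $[a_x,b_x]$, that this summand must be of $W^*$-type, and, crucially, that $w=b_x$. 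Consequently, if every summand is of $+$-type then no such failure exists and $(LS)$ holds; and if every summand with $b_x<\infty$ is of $+$-type, then since a finite failure ($w<\infty$) would force a $W^*$-summand with finite top $b_x=w$, no finite failure can occur and $(LCS)$ holds. The delicate points are handling the relative order of $u,v$ against $u',v'$ throughout the case analysis and identifying the common value $w$ with the summand top $b_x$, as it is precisely this identification that pins the $(LCS)$ threshold to the condition $b_x<\infty$.
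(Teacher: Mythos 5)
Your overall route is the intended one: the paper states this corollary without proof, treating it as an immediate consequence of the preceding proposition (the ordinal-sum dichotomy: each $([a_x,b_x],L)$ is isomorphic to $([0,\infty],+)$ or to $([0,\infty],W^*)$) together with the observation that a failure of strict joint monotonicity can only occur inside a single summand and only at its top. Your identification of the failure value $w$ with the summand top $b_x$ is exactly what pins $(LCS)$ to the condition $b_x<\infty$, and it is correct: under truncated addition a pair $x<x'$, $y<y'$ with $x\oplus y=x'\oplus y'$ forces the common value to be the top, while under $+$ the top is attained only when one argument is the top, which is incompatible with $u<u'\leq b_x$ and $v<v'\leq b_x$.

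However, your justification of the localization lemma fails as written. You argue $L(s,t)\leq L(e,t)\leq L(t,t)=t$, but $L(t,t)=t$ asserts that $t$ is idempotent, which is not assumed and is typically false ($t$ may lie in a summand above $e$, where $L(t,t)>t$). The fact you actually need is that an idempotent $e$ satisfies $L(e,x)=\max\{e,x\}$ for all $x$, and for $x\geq e$ this requires continuity \emph{and associativity}, not monotonicity alone: by continuity $L(e,\cdot)$ maps $[0,\infty]$ onto $[e,\infty]$, so choose $y$ with $L(e,y)=x$; then $L(e,x)=L(e,L(e,y))=L(L(e,e),y)=L(e,y)=x$. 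With this repaired, $s\leq e\leq t$ gives $L(s,t)\leq L(e,t)=t\leq L(s,t)$, as you intended. A further tightening that removes your case split on $w=\max\{u',v'\}$ versus $w>\max\{u',v'\}$: given a failure $L(u,v)=L(u',v')=w$ with $u<u'$, $v<v'$, note $\max\{u,v\}<w$; no idempotent $e$ can lie in $[\min\{u,v\},\max\{u,v\}]$ (else the repaired lemma gives $L(u,v)=\max\{u,v\}<w$), nor in $[\max\{u,v\},w[$ (else $w=L(u,v)\leq L(e,e)=e<w$). Hence $[\min\{u,v\},w[$ contains no idempotent, so all four points and $w$ lie in one summand $]a_x,b_x]$ with $w\leq b_x$, after which your $+$/$W^*$ dichotomy finishes both equivalences.
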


\begin{defn} (\cite{Schweizer1983}) 
For any $T\in\CT$ and $L\in\CL$,  $\tauTL$ is the map
 on $\Delp\times\Delp$ whose value, for any $f$, $g$ in $\Delp$, is the
function $\tauTL(f,g)$ defined on $[0,\infty]$
by
\[\tauTL(f,g)(x)=\sup_{L(u,v)=x}T(f(u),g(v)).\]
If $L=+$, then we drop the $L$ in $\tauTL$ and simply write $\tauT$.
\end{defn}

\begin{prop}\label{tauTL A} {\rm (\cite{Schweizer1983})}
For any $T\in\CT$ and $L\in\CL$, $\tauTL(f,g)$ is increasing on $[0,\infty]$ and satisfies $\tauTL(f,g)(\infty)=1$. Moreover, $\tauTL$ is increasing in each place on $\Delta^+ \times \Delta^+$.
\end{prop}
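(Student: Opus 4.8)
The plan is to verify the three assertions separately, noting first that $\tauTL(f,g)(x)$ is well defined as a supremum, over the nonempty set $\{(u,v): L(u,v)=x\}$ (nonempty because $L$ is onto), of values of $T$, hence lies in $[0,1]$. Two of the three claims are immediate. For the value at $\infty$: since $L$ is onto and increasing, $L(\infty,\infty)=\infty$, so the pair $(\infty,\infty)$ is admissible for $x=\infty$ and contributes $T(f(\infty),g(\infty))=T(1,1)=1$; as every term is at most $1$, we get $\tauTL(f,g)(\infty)=1$. For monotonicity in each place, if $f\leq f'$ pointwise then $T(f(u),g(v))\leq T(f'(u),g(v))$ for every $(u,v)$ because $T$ is increasing in its first place, so taking suprema over $\{L(u,v)=x\}$ gives $\tauTL(f,g)\leq\tauTL(f',g)$; the argument in the second variable is identical using that $T$ is increasing in its second place, and the two combine to give monotonicity in each place jointly.

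The substantive claim is that $x\mapsto\tauTL(f,g)(x)$ is increasing. Here the plan is to show, for $x_1\leq x_2$, that every term defining $\tauTL(f,g)(x_1)$ is dominated by $\tauTL(f,g)(x_2)$. Concretely, I fix $(u_1,v_1)$ with $L(u_1,v_1)=x_1$ and seek a pair $(u_2,v_2)$ with $u_2\geq u_1$, $v_2\geq v_1$ and $L(u_2,v_2)=x_2$; once such a pair is found, monotonicity of $f$, $g$ and of $T$ in each place yields $T(f(u_1),g(v_1))\leq T(f(u_2),g(v_2))\leq\tauTL(f,g)(x_2)$, and taking the supremum over $(u_1,v_1)$ finishes the argument. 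The case $x_2=\infty$ is dispatched directly, since then $\tauTL(f,g)(x_2)=1$ by the previous paragraph. For $x_2<\infty$, I choose any preimage $(s,t)$ of $x_2$ (using surjectivity of $L$) and set $u_2^0=\max\{u_1,s\}$ and $v_2^0=\max\{v_1,t\}$, so that, by monotonicity of $L$, the value at the corner $(u_2^0,v_2^0)$ is $\geq x_2$ while the value at the corner $(u_1,v_1)$ is $x_1\leq x_2$, within the rectangle $R=[u_1,u_2^0]\times[v_1,v_2^0]$.

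The main obstacle is the restricted continuity of $L$: to run an intermediate-value argument I want $L$ continuous on $R$, whereas (L3) only guarantees continuity away from $(0,\infty)$ and $(\infty,0)$. The key observation that removes this difficulty is that each exceptional point has a zero coordinate, and a zero coordinate trivializes the corresponding term: since $1$ is the identity of $T$ and $T$ is increasing, $T(0,b)\leq T(0,1)=0$ and $T(a,0)\leq T(1,0)=0$, so $T(a,b)=0$ whenever $a=0$ or $b=0$. Hence if $u_1=0$ or $v_1=0$ then $f(u_1)=0$ or $g(v_1)=0$ (as $f,g\in\Delp$), so $T(f(u_1),g(v_1))=0\leq\tauTL(f,g)(x_2)$ holds trivially. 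In the remaining case $u_1>0$ and $v_1>0$, every point of $R$ has both coordinates strictly positive, so $R$ contains neither $(0,\infty)$ nor $(\infty,0)$; thus $L$ is continuous on the compact connected set $R$, and its image is a closed subinterval of $[0,\infty]$ containing both $x_1$ and $L(u_2^0,v_2^0)$. Since $x_1\leq x_2\leq L(u_2^0,v_2^0)$, this interval contains $x_2$, so any $(u_2,v_2)\in R$ with $L(u_2,v_2)=x_2$ is the desired dominating pair, completing the proof of monotonicity and hence of the proposition.
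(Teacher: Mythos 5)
The paper states Proposition~\ref{tauTL A} without proof, attributing it to \cite{Schweizer1983}, so there is no in-paper argument to compare yours against line by line; judged on its own, your proof is correct and complete. The routine parts are handled as any proof must be: $\tauTL(f,g)(\infty)=1$ follows since surjectivity plus monotonicity of $L$ force $L(\infty,\infty)=\infty$ and $T(f(\infty),g(\infty))=T(1,1)=1$, and monotonicity of $\tauTL$ in each place is a supremum comparison over a fixed index set using monotonicity of $T$. The substantive content is monotonicity of $x\mapsto\tauTL(f,g)(x)$, where the natural plan---dominate each pair $(u_1,v_1)$ with $L(u_1,v_1)=x_1$ by a coordinatewise larger pair on the level set of $x_2$---runs into the fact that (L3) permits $L$ to be discontinuous at $(0,\infty)$ and $(\infty,0)$, so the intermediate-value argument cannot be applied blindly on the rectangle $[u_1,u_2^0]\times[v_1,v_2^0]$. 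Your resolution of this is exactly right and is the one nontrivial idea needed: each exceptional point has a zero coordinate, and if $u_1=0$ or $v_1=0$ then $f(u_1)=0$ or $g(v_1)=0$ (as $f,g\in\Delp$), whence $T(f(u_1),g(v_1))\leq T(0,1)=0$ and the term is trivially dominated; otherwise the rectangle has strictly positive coordinates throughout, avoids both exceptional points, $L$ is continuous on a compact connected set, and its image is a closed interval containing both $x_1$ and $L(u_2^0,v_2^0)\geq x_2$, producing the required dominating pair $(u_2,v_2)$. Incidentally, this zero-coordinate trivialization ($T(a,b)=0$ when $a=0$ or $b=0$) is the same device the paper itself deploys later in its necessity arguments---for instance in part (a), where $\vep_u(r)=0$ or $\vep_v(s)=0$ kills the corresponding term---so your argument is stylistically consonant with the paper even though the paper never proves this particular proposition.
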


Generally, $\tauTL(f,g)$ need not be left continuous on $]0,\infty[$, hence not in $\Delp$. Consequently, $\tauTL$ is not necessarily a triangle function on $\Delp$. To ensure that $\tauTL$ becomes a triangle function, the operations $T\in\CT$ and  $L\in\CL$ must satisfy some additional conditions.



The following theorem provides a set of sufficient conditions for $\tauTL$ to be a triangle function. By Proposition \ref{L0 implies cont}, it is clearly a reformulated version of Theorem 7.2.4 in \cite{Schweizer1983}.
\begin{thm}
Let $T$ be a left continuous t-norm on $[0,1]$ and $L$ be a continuous t-conorm on $[0,\infty]$ satisfying the $(LS)$ condition. Then $\tauTL$ is a triangle function.
\end{thm}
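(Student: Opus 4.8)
The plan is to verify the defining properties of a triangle function one at a time. By Proposition~\ref{tauTL A} we already have that $\tauTL$ is increasing in each place and that each $\tauTL(f,g)$ is increasing on $[0,\infty]$ with $\tauTL(f,g)(\infty)=1$. Commutativity is immediate: since $T$ and $L$ are commutative and $(u,v)\mapsto(v,u)$ is a bijection of $\{(u,v):L(u,v)=x\}$, the two suprema defining $\tauTL(f,g)(x)$ and $\tauTL(g,f)(x)$ coincide. The value at $0$ is also easy: because $L$ is increasing with $0$ as identity, $L(u,v)=0$ forces $u=v=0$, so $\tauTL(f,g)(0)=T(0,0)\leq T(0,1)=0$. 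Thus the substantive work is to show (i) $\tauTL(f,g)$ is left continuous on $]0,\infty[$, so that $\tauTL(f,g)\in\Delp$; (ii) $\vep_0$ is an identity; and (iii) $\tauTL$ is associative.

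For (i) I would argue pair-by-pair. Fix $x\in\,]0,\infty[$; monotonicity gives $\sup_{x'<x}\tauTL(f,g)(x')\leq\tauTL(f,g)(x)$, so only the reverse inequality needs proof. Take any $(u,v)$ with $L(u,v)=x$; a zero coordinate makes $T(f(u),g(v))=0$, so assume $u,v>0$. Left continuity of $T$ gives $T(f(u),g(v))=\sup\{T(s,t):s<f(u),\,t<g(v)\}$, and for each such $s,t$ the left continuity of $f$ and $g$ supplies $u_0<u$ and $v_0<v$ with $f(u_0)>s$ and $g(v_0)>t$. Here the $(LS)$ condition is decisive: from $u_0<u$ and $v_0<v$ it yields $x':=L(u_0,v_0)<L(u,v)=x$, whence $\tauTL(f,g)(x')\geq T(f(u_0),g(v_0))\geq T(s,t)$. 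Taking suprema over $s,t$ and then over all admissible $(u,v)$ gives $\sup_{x'<x}\tauTL(f,g)(x')\geq\tauTL(f,g)(x)$.

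For (ii) I would compute $\tauTL(f,\vep_0)(x)$ for $x\in\,]0,\infty[$ (the cases $x=0,\infty$ being trivial). For the lower bound, fix $u<x$: continuity of $L$ together with $L(u,0)=u<x$ and $L(u,\infty)=\infty$ yields, via the intermediate value theorem, some $v>0$ with $L(u,v)=x$, so $\vep_0(v)=1$ and $T(f(u),\vep_0(v))=f(u)$; taking the supremum over $u<x$ and invoking left continuity of $f$ gives $\tauTL(f,\vep_0)(x)\geq f(x)$. For the upper bound, any $(u,v)$ with $L(u,v)=x$ satisfies $u\leq L(u,v)=x$, hence $T(f(u),\vep_0(v))\leq f(u)\leq f(x)$. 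Thus $\tauTL(f,\vep_0)=f$, and commutativity makes $\vep_0$ a two-sided identity.

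The crux is (iii). I would expand each side into a double supremum and pull the inner supremum through $T$. Writing
\[
\tauTL(\tauTL(f,g),h)(x)=\sup_{L(w,z)=x}T\Bigl(\sup_{L(u,v)=w}T(f(u),g(v)),\,h(z)\Bigr),
\]
the \emph{full} left continuity of $T$ lets me interchange $T$ with the inner supremum (for monotone families $c_\alpha\nearrow c$ one has $T(c,b)=\sup\{T(u,v):u<c,v<b\}\leq\sup_\alpha T(c_\alpha,b)$), and associativity of $T$ then collapses the right-hand side to $\sup T(f(u),T(g(v),h(z)))$ over all $(u,v,z)$ with $L(L(u,v),z)=x$. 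The symmetric computation turns $\tauTL(f,\tauTL(g,h))(x)$ into the same supremum over $(u,v,z)$ with $L(u,L(v,z))=x$, and associativity of the t-conorm $L$ identifies the two index sets, so the two sides agree. I expect the genuine obstacles to be the left-continuity verification in (i), the unique place where the strict monotonicity $(LS)$ is truly needed, and the supremum–$T$ interchange in (iii). The latter is the conceptual heart of the theorem yet is formally clean here precisely because $T$ is assumed fully left continuous; it is exactly this interchange that breaks down when $T$ is only weakly left continuous, which is why the associativity argument must be reworked for the weakened hypotheses of the main result.
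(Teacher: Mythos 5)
Your proof is correct. Note first that the paper does not actually prove this theorem: it is presented (via Proposition~\ref{L0 implies cont}) as a reformulation of Theorem~7.2.4 in \cite{Schweizer1983}, so your argument is a self-contained proof where the paper offers only a citation; the closest in-paper machinery is Proposition~\ref{suff proo}, whose Part~II again defers to \cite{Schweizer1983} (with Ying's remark that $(LS)$ may be weakened to $(LCS)$), and whose Part~I treats the harder weakly-left-continuous case by reducing the Archimedean $L$ to $+$ or $W^*$ and extracting convergent subsequences via the compactness Lemma~\ref{Lem A}. Your route avoids both devices: full left continuity of $T$ licenses the interchange of $T$ with \emph{arbitrary} suprema, so associativity collapses directly to a single supremum over the triple index set $\{(u,v,z): L(L(u,v),z)=x\}$, identified with its mirror image by associativity of $L$; no representation theorem for $L$ and no subsequence argument is needed, and your proof works uniformly for every continuous t-conorm satisfying $(LS)$, Archimedean or not. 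Two small polish points: in step (iii) you invoke the interchange ``for monotone families,'' but the inner family $\{T(f(u),g(v)): L(u,v)=w\}$ is not monotone --- fortunately the inequality you display is already the proof for arbitrary families (given $u<c=\sup_\alpha c_\alpha$, choose $\alpha$ with $u<c_\alpha$, whence $T(u,v)\leq T(c_\alpha,b)$), so you should state it that way, and for the side $\tauTL(f,\tauTL(g,h))$ you need the interchange in the second argument of $T$, available by commutativity. Finally, observe that your left-continuity step (i) uses strictness of $L$ only at pairs with $L(u,v)=x<\infty$, so it already goes through under the weaker condition $(LCS)$ --- exactly Ying's observation \cite{Ying1992} quoted in Part~II of Proposition~\ref{suff proo} --- while your closing remark correctly identifies the supremum--$T$ interchange as the precise point that fails for merely weakly left continuous $T$ and forces the paper's Lemma~\ref{Lem A} in the Archimedean case.
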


For $\tauTL$ to be a triangle function on $\Delp$, left-continuity of $T$ is sufficient but not necessary.

\begin{exmp}\cite{Schweizer1983}   The drastic product $D$ is not left continuous, however,  $\tau_{_{D}}$
is a triangle function on $\Delp$. In fact, for all $f,g\in\Delp$,  
$$\tau_{_{D}}(f,g)(x)=\max\{f(\max\{0,x-g^{\wedge}(1)\}), g(\max\{0,x-f^{\wedge}(1)\})\},$$
where $f^\wedge(1)=\sup\{t\in[0,1]|f(t)<1\}$.
Then one can easily see that $\tau_{_D}$ is a triangle function. 
\end{exmp}

Therefore, Schweizer and Sklar posed the following open problem, see Problem 7.9.6 in \cite{Schweizer1983}.
\begin{prob}\label{open prob}
 Find conditions on $T$ and $L$ that are both necessary and sufficient (rather than merely sufficient) for $\tauTL$ to be a triangle function. Similarly, find necessary and sufficient conditions on $T$ and $L$ for $\tauTL$ to be continuous.
\end{prob}



\section{The main result}

In this section, we provide a complete answer to the open problem posed by Schweizer and Sklar, with the main result summarized in the following theorem.

\begin{thm}\label{main result} Given $T\in\CT$ and $L\in\CL$, the map $\tauTL$ is a triangle function on $\Delp$ if and only if the following conditions hold:
\begin{itemize}
\item[{\rm(a)}] $L$ is a continuous t-conorm and satisfies condition $(LCS)$;
\item[{\rm(b)}] $T$ is a t-norm on $[0,1]$;
\item[{\rm(c)}] $T$ is weakly left continuous and if $L$ is non-Archimedean then $T$ is left continuous.
\end{itemize}
\end{thm}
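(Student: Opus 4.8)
The plan is to prove the two implications by transporting the algebraic structure across the order-embeddings $r\mapsto\vep_r$ and $p\mapsto V_p$. For the necessity of (a)--(c), assume $\tauTL$ is a triangle function. A direct computation with the two embeddings gives $\tauTL(V_p,V_q)=V_{T(p,q)}$ for all $p,q\in[0,1]$ and $\tauTL(\vep_r,\vep_s)=\vep_{L(r,s)}$ for all $r,s$, the second identity being forced precisely by the requirement that $\tauTL(\vep_r,\vep_s)$ again lie in $\Delp$. Since both embeddings are injective and preserve the relevant suprema and infima, commutativity, associativity and the unit law of $\tauTL$ descend to make $T$ a t-norm and $L$ a t-conorm, while the demand that $\tauTL(f,g)\in\Delp$ (left continuity together with $\tauTL(\vep_r,\vep_s)=\vep_{L(r,s)}$) forces continuity and $(LCS)$ for $L$, weak left continuity for $T$, and full left continuity of $T$ once $L$ has a non-trivial idempotent. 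This is exactly Ying's analysis, which I would cite rather than reproduce.

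For the sufficiency, assume (a)--(c). The first step is to verify $\tauTL(f,g)\in\Delp$. Monotonicity and $\tauTL(f,g)(\infty)=1$ are Proposition~\ref{tauTL A}; since $0$ is the unit of the t-conorm $L$ we have $L(u,v)=0$ only for $u=v=0$, so $\tauTL(f,g)(0)=T(0,0)=0$; commutativity of $\tauTL$ is inherited from that of $T$ and $L$; and $\vep_0$ is a unit because $L(\cdot,0)=\mathrm{id}$, $T(\cdot,1)=\mathrm{id}$ and left continuity of $f$ yield $\tauTL(f,\vep_0)(x)=\sup_{u<x}f(u)=f(x)$. The substantive point is left continuity of $\tauTL(f,g)$ on $]0,\infty[$: along a level set $L(u,v)=x$ one may push $u$ and $v$ down independently while keeping $L(u,v)$ just below $x$ (possible because $L$ is continuous and, by $(LCS)$, strictly increasing wherever finite), and weak left continuity of $T$ together with left continuity of $f,g$ is exactly what recovers $T(f(u),g(v))$ as a supremum over such punctured perturbations, giving $\tauTL(f,g)(x)\le\sup_{y<x}\tauTL(f,g)(y)$. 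At an idempotent value of $L$ the operation degenerates locally to $\max$, the two coordinates lock together, and one must instead approximate $T(f(x),g(x))$ by $T(f(y),g(y))$ with $y<x$ simultaneously in both arguments; this double corner needs full left continuity of $T$, which (c) supplies precisely in the non-Archimedean case.

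The main obstacle is associativity. I would prove $\tauTL(\tauTL(f,g),h)=\tauTL(f,\tauTL(g,h))$ by squeezing both iterates against the single expression
\[
\Theta(x)=\sup\{\,T(T(f(u),g(v)),h(t)):L(L(u,v),t)=x\,\},
\]
which is symmetric in $f,g,h$ by associativity and commutativity of $T$ and $L$, and which is increasing in $x$ by an argument analogous to Proposition~\ref{tauTL A}. The inequality $\tauTL(\tauTL(f,g),h)(x)\ge\Theta(x)$ is routine: replace the inner value $T(f(u),g(v))$ by the larger $\tauTL(f,g)(L(u,v))$. The reverse inequality $\tauTL(\tauTL(f,g),h)(x)\le\Theta(x)$ is the crux, as it amounts to commuting the outer $T(-,h(t))$ past the inner supremum defining $\tauTL(f,g)(w)$, i.e.\ to an identity $T(\sup_\alpha a_\alpha,c)=\sup_\alpha T(a_\alpha,c)$. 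When $T$ is left continuous this is immediate, which settles the non-Archimedean case; the difficulty is the Archimedean case with $T$ only weakly left continuous, where the inner supremum $\tauTL(f,g)(w)$ need not be attained and the identity can fail at a fixed level $w$.

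The technical heart of the argument (the lemma announced in the introduction) resolves this by a budget-transfer scheme. Fix $w,t$ with $L(w,t)=x<\infty$ and apply weak left continuity to $T(\tauTL(f,g)(w),h(t))$. In the arm where the first coordinate drops strictly below $\tauTL(f,g)(w)$, the inner supremum is exceeded by some admissible $T(f(u),g(v))$ with $L(u,v)=w$, and the corresponding term is dominated by $\Theta(x)$ directly. In the arm where only the second coordinate drops strictly below $h(t)$, I would use left continuity of $h$ to pass to $t'<t$ with $h(t')$ still above it, and then transfer the freed $L$-budget back into the first operation, enlarging $w$ to $w'$ with $L(w',t')=x$; because $L$ is continuous and strictly increasing at finite values ($(LCS)$ plus Archimedeanness), this enlargement is genuine and produces either a term already bounded by $\Theta(x)$ or a strictly smaller level $L(w'',t')<x$ that is absorbed by monotonicity of $\Theta$. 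Iterating this trade yields a strictly descending sequence in the second coordinate whose limit is controlled by the left continuity established above; carrying out this iteration and its limit step rigorously, and checking that it degenerates exactly at idempotent values of $L$ (where no budget can be transferred, forcing the full left continuity demanded by (c)), is where I expect essentially all of the difficulty to reside. The symmetric computation then gives $\tauTL(f,\tauTL(g,h))=\Theta=\tauTL(\tauTL(f,g),h)$, completing the sufficiency proof.
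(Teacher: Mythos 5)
Your overall architecture matches the paper's: necessity via the embeddings $p\mapsto V_p$ and $r\mapsto\vep_r$ with Ying's analysis, sufficiency split into the Archimedean/weakly-left-continuous and non-Archimedean/left-continuous cases, and associativity proved by squeezing both iterates against the symmetric triple supremum $\Theta$, with the $\geq$ direction routine and the crux being to pull the inner supremum out of the first argument of $T$. But at exactly that crux there is a genuine gap, and you flag it yourself. The budget-transfer iteration does not close: after one transfer you face a term $T(\tauTL(f,g)(w),h(t'))$ with $L(w,t')<x$, and each further application of weak left continuity can land forever in the arm where only the second coordinate drops, producing $t>t'>t''>\cdots$ while the first argument remains the untouched supremum $\tauTL(f,g)(w)$. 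Nothing in the scheme ever converts that supremum into a single admissible term $T(f(u),g(v))$, and the limit step fails for a concrete reason: the $t_n$ decrease, so they approach their infimum $t_\infty$ \emph{from the right}, where left continuity of $h$ (the only continuity you have) gives no control --- $\lim_n h(t_n)$ is a right limit, possibly strictly larger than $h(t_\infty)$, and passing it inside $T$ would require a right-continuity of $T$ that is unavailable. The iteration therefore yields only an infinite descent of nearly-equal terms, never a bound by $\Theta(x)$. (A smaller slip: condition $(LCS)$ does not make $L$ ``strictly increasing wherever finite'' in each place separately --- $M^*=\max$ satisfies $(LCS)$ --- it gives strictness only when \emph{both} coordinates increase and the value stays finite.)

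The missing idea is the paper's Lemma~\ref{Lem A}, a compactness statement needing no continuity of $T$ at all. After reducing the Archimedean $L$ by isomorphism to $+$ (or $W^*$), fix $y<x$ and pick pairs $(u_n,v_n)$ with $u_n+v_n=y$ and $T(f(u_n),g(v_n))>\tauT(f,g)(y)-\tfrac1n$; compactness of the level set gives a subsequence monotone in the first coordinate converging to $(u_0,v_0)$, and one then spends the \emph{entire} slack $x-y$ on the adversely moving coordinate (say $u_x=u_0$, $v_x=x-u_0>v_0$), so that $f(u_{n_k})\leq f(u_x)$ by monotonicity of $f$ and $g(v_{n_k})\leq g(v_x)$ eventually; the single pair $(u_x,v_x)$ at level exactly $x$ then dominates the whole supremum at level $y$. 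With this lemma your arm~2 closes in one step. Note also the paper's order of quantifiers in Proposition~\ref{suff proo}: it does not attempt your exact-level bound $T(\tauT(f,g)(w),h(t))\leq\Theta(x)$ with $L(w,t)=x$, but instead bounds the iterated supremum at every level $y<x$ --- where the strict slack is what makes Lemma~\ref{Lem A} applicable --- and then invokes left continuity of $\tauT(\tauT(f,g),h)$, already secured in the well-definedness step. Without some substitute for this compactness argument, your sufficiency proof is incomplete precisely where the paper's main technical contribution lies.
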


\begin{rem}
Ying has already established the necessity of the three conditions (a)-(c) in the literature \cite{Ying1992}. In fact, in that paper, item (3) in Section 3 and items (1)–(3) in Section 4 provide the proof for (a); items (4) and (5) in Section 4 give (b);  item (4) in Section (3) proves the first part of (c) and item (8) in Section 4 yields the second part of (c).
\end{rem} 

 Nevertheless, we choose to present a full proof here to enhance readability for the audience.  The following lemma can provide some technical assistance.

 \begin{lem} \label{tau eq}
Let $T\in\CT$ and $L\in\CL$. If $\tauTL$ is a triangle function, then for all $f,g\in\Delp$, and all $x\in]0,\infty[$,
$$\tauTL(f,g)(x)=\sup_{L(u,v)<x}T(f(u),g(v)).$$
\end{lem}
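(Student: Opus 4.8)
The plan is to reduce the identity to a statement about the left limit of a monotone function, exploiting the fact that the hypothesis enters in exactly one place. Writing $\phi:=\tauTL(f,g)$, the assumption that $\tauTL$ is a triangle function guarantees $\phi\in\Delp$; in particular $\phi$ is increasing on $[0,\infty]$ (which Proposition~\ref{tauTL A} already gives even without the triangle-function hypothesis) and, crucially, left-continuous on $]0,\infty[$. This left-continuity is the \emph{only} consequence of the hypothesis I expect to need, and it is genuinely indispensable: without it the two sides can differ, since the bare supremum $\sup_{L(u,v)=x}$ need not be left-continuous.

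The key computational step is a reindexing of the supremum on the right-hand side. By surjectivity of $L$ (condition (L1)) every fiber $\{(u,v):L(u,v)=x'\}$ is nonempty, and one has the decomposition into level sets
$$\{(u,v)\in[0,\infty]^2 : L(u,v)<x\}\;=\;\bigcup_{0\le x'<x}\{(u,v) : L(u,v)=x'\}.$$
Taking the supremum of $T(f(u),g(v))$ over both sides and using that the supremum over a union equals the supremum of the partial suprema, I obtain
$$\sup_{L(u,v)<x}T(f(u),g(v))\;=\;\sup_{0\le x'<x}\ \sup_{L(u,v)=x'}T(f(u),g(v))\;=\;\sup_{0\le x'<x}\phi(x').$$

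Finally I would invoke monotonicity and left-continuity to collapse the outer supremum. Since $\phi$ is increasing, $\sup_{0\le x'<x}\phi(x')$ is precisely the left limit $\phi(x^-)=\lim_{x'\to x^-}\phi(x')$; and since $\phi$ is left-continuous at the interior point $x\in]0,\infty[$, this left limit equals $\phi(x)=\tauTL(f,g)(x)$. Chaining the two displays then yields the asserted equality.

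The step I expect to be the only mildly delicate point — more bookkeeping than real difficulty — is the reindexing: one must check that the grouping of the region $\{L(u,v)<x\}$ into level fibers is exhaustive and that passing to suprema loses nothing. This is exactly where (L1) is convenient, as it keeps every fiber nonempty so that no degenerate $\sup\emptyset$ arises. Everything else is a formal consequence of $\phi$ being an increasing, left-continuous distance distribution function, so I would keep the write-up correspondingly short.
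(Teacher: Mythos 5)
Your proposal is correct and is essentially the paper's own proof read in the opposite direction: both arguments rest solely on the fact that $\tauTL(f,g)\in\Delp$ is increasing and left-continuous, so that $\tauTL(f,g)(x)=\sup_{z<x}\tauTL(f,g)(z)$, and then collapse the resulting iterated supremum over the level sets $\{(u,v):L(u,v)=z\}$, $z<x$, into the single supremum over $\{(u,v):L(u,v)<x\}$. Your additional remark that (L1) keeps every fiber nonempty is a harmless extra precaution not spelled out in the paper.
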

\begin{proof}
Since $h=:\tauTL(f,g)$ is left continuous at any $x\in]0,\infty[$, we have 
$$h(x)=\sup_{z<x}h(z)
=\sup_{z<x}\sup_{L(u,v)=z}T(f(u),g(v))=\sup_{L(u,v)<x}T(f(u),g(v)).$$
\end{proof}

Now we prove the necessity of the conditions (a)-(c). 
 
(a) \emph{$L$ is a continuous t-conorm and satisfies condition $(LCS)$.} 
\begin{proof} We divide the proof into two parts. 

(i) $L$ satisfies the $(LCS)$ condition. 
We aim to show that for any $u < u'$ and $v < v'$, if $L(u',v')<\infty$, then $L(u, v) < L(u', v')$. 
Let $x = L(u, v)$ and $x' = L(u', v') < \infty$. Define the function $h= \tauTL(\vep_u, \vep_v)$.   We analyze the values of $h$ at $x'$ and $x$.

First, consider $h(x')$. It holds that 
\[
h(x')=\sup_{L(r,s)=x'}T({\vep_u}(r),{\vep_v}(s))
\geq T({\vep_u}(u'),{\vep_v}(v'))
=1.
\]
Second, we compute $h(x)$. By Lemma \ref{tau eq}, it holds that:
\[
h(x)= \sup_{L(r, s) < x} T(\vep_u(r), \vep_v(s)).
\]
Now, suppose $L(r, s) < x$. We claim that either $r < u$ or $s < v$. Indeed, if both $r \geq u$ and $s \geq v$ held, then by the monotonicity of $L$, we would have $L(r, s) \geq L(u, v) = x$, contradicting $L(r, s) < x$. Thus, for any such pair $(r, s)$, we have $\vep_u(r) = 0$ or $\vep_v(s) = 0$, which implies $T(\vep_u(r), \vep_v(s)) = 0$. Consequently:
\[
h(x) = \sup_{L(r, s) < x} T(\vep_u(r), \vep_v(s)) = 0.
\]

Combining the results, we have $h(x) = 0 < 1 \leq h(x')$. 
Since $h$ is non-decreasing, it follows that $x < x'$, i.e., $L(u, v) < L(u', v')$.

(ii) $L$ is a continuous t-conorm on $[0,1]$. Our strategy is to show that for all $u,v\in[0,\infty]$, 
$$\tauTL({\vep_u},{\vep_v})=\vep_{L(u,v)}.$$ According to Lemma 7.2.13 in \cite{Schweizer1983}, the identity holds when $L$ satisfies the $(LS)$ condition. We show that it remains valid without the $(LS)$ condition on $L$. 


Given $u,v\in[0,\infty[$, let $L(u,v)=x_0$ and $h=\tauTL(\vep_u,\vep_v)$. For any $x<x_0$, 
by Lemma \ref{tau eq}, 
$$h(x)=\sup_{L(r,s)<x}T(\vep_u(r),\vep_v(s)).$$
 For $r$ and $s$ with $L(r,s)<x$, we have that either $r<u$ or $s<v$. In fact, if both $r \geq u$ and $s \geq v$ held, then we would have $L(r, s) \geq L(u,v)=x_0$, contradicting $L(r, s) < x<x_0$. Thus, for any such pair $(r, s)$, we have $\vep_u(r) = 0$ or $\vep_v(s) = 0$. Consequently, it holds that for all $x<x_0$, 
\[
h(x) = \sup_{L(r, s) < x} T(\vep_u(r), \vep_v(s)) = 0.
\]
If $x_0=\infty$, then it holds that $$h=\vep_\infty=\vep_{x_0}.$$
If $x_0<\infty$, then
 $$h(x_0)=\sup_{x<x_0}h(x)=0.$$
 For any $x\in]x_0,\infty[$, the function $t\mapsto L(u+t,v+t)$ is continuous on $[0,\infty]$ and $x_0=L(u,v)<x<\infty=L(\infty,\infty)$. It ensures existence of $r>u$ and $s>v$ with $L(r,s)=x$. For such $r$ and $s$, ${\vep_u}(r)=1$ and ${\vep_v}(s)=1$, so 
 $$h(x)=\sup_{L(r,s)=x}T({\vep_u}(r),{\vep_v}(s))=1.$$
Therefore, we have shown that, for all $x\in[0,\infty]$, $h(x)=\vep_{x_0}(x)$.  So, it holds that, for all $u,v\in[0,\infty[$, $$\tauTL(\vep_u,\vep_v)=\vep_{L(u,v)}.$$
 
 Paricularly, for all $v\in[0,\infty[$, $\vep_v=\tauTL(\vep_0,\vep_v)=\vep_{L(0,v)}$, which implies that $L(v,0)=v$. Since $L$ is non-decreasing on each place, we have that $L(0,\infty)=\infty$, and $L(u,\infty)=\infty$ for all $u\in[0,\infty]$. Consequently, for all $u\in[0,\infty]$, 
 it holds that 
 $$\tauTL(\vep_u,\vep_\infty)=\vep_\infty
 =\vep_{L(u,\infty)}.$$
 A similar argument shows that 
 $$\tauTL(\vep_\infty,\vep_u)=\vep_\infty
 =\vep_{L(\infty,u)}.$$

 Therefore, we have shown that for all $u,v\in[0,\infty]$, it holds that
 $$\tauTL(\vep_u,\vep_v)=\vep_{L(u,v)}.$$
 Since $\tauTL$ is a triangle function, it is commutative and associative, and has $\vep_0$ as identity. Thus, $L$ is commutative and associative, and has $0$ as identity, that is, $L$ is a t-conorm. Moreover, by Proposition \ref{L0 implies cont}, $L$ is continuous. 
  \end{proof}

(b) \emph{\(T\) is a t-norm on \([0,1]\).}
\begin{proof}
It is show by Lemma 7.10 in \cite{Saminger2008}, but with slightly different preset conditions on \(T\) and \(L\). Notably, here we do not assume that \(L\) satisfies the \((LS)\) condition.

We aim to show that \(\tauTL(V_p, V_q) = V_{T(p,q)}\) for all \(p, q \in [0,1]\). Once this is established, the commutativity and associativity of \(T\) follow directly from the corresponding properties of \(\tauTL\).

Recall that \(V_p\) is defined by \(V_p(0) = 0\) and \(V_p(x) = p\) for all \(x\in]0,\infty[\).   We now verify the equality pointwise.
Notice that 
\[
\tauTL(V_p, V_q)(x) = \sup_{L(u,v) =x} T(V_p(u), V_q(v)).
\]
First, consider \(x = 0\). We know that \(L(u,v) = 0\) implies both \(u = 0\) and \(v = 0\) since $L$ is a t-conorm(it is shown by (a)).  Thus,  
 \[\tauTL(V_p, V_q)(0) = 0 = V_{T(p,q)}(0).\]
Second, consider any \(x \in ]0, \infty[\).  
Since $x\mapsto L(x,x)$ is a continuous function from $[0,\infty]$ onto $[0,\infty]$, there is some $u\in]0,\infty[$ such that $x=L(u,u)$. Clearly, we have $V_p(u)=p$ and $V_q(u)=q$. 
Moreover, for any pairs with \(L(u,v) = x\), we have $\max\{u,v\}\leq L(u,v)=x$ since $L$ is a t-conorm.  Thus, the value $V_p(u)$ is either $p$ or $0$,  and $V_q(v)$ is either $q$ or $0$. Thus, the supremum 
 \[\sup_{L(u,v)=x}T(V_p(u), V_q(v))
 =T(p,q)=V_{T(p,q)}(x).\]
Therefore, we conclude that \(\tauTL(V_p, V_q) = V_{T(p,q)}\). This completes the proof.
\end{proof}




(c) \emph{$T$ is weakly left continuous and if $L$ is non-Archimedean, then $T$ is left continuous.}

\begin{proof}
We aim to prove that $T$ is weakly left continuous, and furthermore, if $L$ is non-Archimedean, then $T$ is left continuous. 
It suffices to establish the following inequality for any given $x_0, y_0 \in ]0,1]$:
\[
T(x_0, y_0) \leq \sup\,\{ T(x, y) \mid x \leq x_0,\ y < y_0 \text{ or } x < x_0,\ y \leq y_0 \}.
\]

Since $L$ is continuous and $L(0,0)=0$, $L(\infty,\infty)=\infty$, there exists an $a \in ]0,\infty[$ such that $L(a, a) = z_0$ for any fixed $z_0 \in ]0,\infty[$. Define functions $f, g \in \Delta^+$ as follows:
\[
f(x) = 
\begin{cases}
\frac{x_0}{a}x, & 0 \leq x \leq a, \\
x_0,          & a < x < \infty, \\
1,            & x = \infty,
\end{cases}
\quad
g(x) = 
\begin{cases}
\frac{y_0}{a}x, & 0 \leq x \leq a, \\
y_0,          & a < x < \infty, \\
1,            & x = \infty.
\end{cases}
\]

For any $u, v \in [0, \infty]$ satisfying $L(u, v) = z_0$, the inequality $\max\{u, v\} \leq L(u, v) = z_0$ implies $u \leq z_0 < \infty$ and $v \leq z_0 < \infty$. Consequently, $f(u) \leq x_0$ and $g(v) \leq y_0$. We then observe:
\[
T(x_0, y_0) \geq \sup_{L(u,v)=z_0} T(f(u), g(v)) \geq T(f(a), g(a)) = T(x_0, y_0),
\]
which forces the equality:
\[
T(x_0, y_0) = \sup_{L(u,v)=z_0} T(f(u), g(v)) = \tauTL(f, g)(z_0).
\]

By Lemma \ref{tau eq}, at $z_0$, we have:
\[
\tauTL(f, g)(z_0)=\sup_{L(u,v) < z_0} T(f(u), g(v)).
\]

Now, for any $u, v \in [0, \infty]$ with $L(u, v) < z_0$, the condition $\max\{u, v\} \leq L(u, v) < z_0$ holds. If both $u \geq a$ and $v \geq a$ were true, then $L(u, v) \geq L(a, a) = z_0$ would follow, a contradiction. Hence, either $u < a$ or $v < a$. This implies either $f(u) < f(a) = x_0$ or $g(v) < g(a) = y_0$. Therefore,
\begin{align*}
T(x_0, y_0) &= \sup_{L(u,v) < z_0} T(f(u), g(v)) \\
            &\leq \sup\, \{ T(x, y) \mid x < x_0,\ y \leq y_0 \text{ or } x \leq x_0,\ y < y_0 \}.
\end{align*}
This establishes the weak left continuity of $T$.

To prove the second claim, suppose $L$ is non-Archimedean. Then there exists an idempotent element $z_0 \in ]0, \infty[$ such that $L(z_0, z_0) = z_0$. Take $a = z_0$ in the above construction. For any $u, v$ with $L(u, v) < z_0$, we have $\max\{u, v\} \leq L(u, v) < z_0 = a$, which implies $u < a$ and $v < a$. Consequently, $f(u) < x_0$ and $g(v) < y_0$. Thus,
\begin{align*}
T(x_0, y_0) &= \sup_{L(u,v) < z_0} T(f(u), g(v)) \\
            &\leq \sup\, \{ T(x, y) \mid x < x_0,\ y < y_0 \},
\end{align*}
proving that $T$ is left continuous.
\end{proof}

Next, we prove the sufficiency part of Theorem \ref{main result}. The following lemma is crucial, as it ensures that for an Archimedean t-conorm $L$ and a weakly left continuous t-norm $T$, $\tauTL$ satisfies the associative law.
  
\begin{lem}\label{Lem A}
Suppose $T$ is a t-norm on $[0,1]$ and $L$ is a continuous Archimedean t-conorm on $[0,\infty]$. Given $f,g\in\Delta^+$ and $y\in ]0,\infty[$, then for all $x>y$, there is a pair $(u_x,v_x)$ with $L(u_x,v_x)=x$ such that $T(f(u_x),g(v_x))\geq \tauTL(f,g)(y)$.
\end{lem}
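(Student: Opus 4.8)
The plan is to realize the number $\alpha := \tauTL(f,g)(y) = \sup_{L(u,v)=y} T(f(u),g(v))$ at a single pair whose $L$-value is any prescribed $x>y$. I would take a maximizing sequence for the supremum at $y$, extract a convergent subsequence, and then \emph{strictly} stretch its limit onto the level set $\{L(u,v)=x\}$. The delicate point — and the reason a naive argument fails — is that $T$ is only assumed increasing, not (left-)continuous, so one cannot pass to a limit inside $T$; the strict overshoot is exactly the device that lets me replace a limit of $T$-values by a single monotonicity inequality.

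First I would choose pairs $(u_n,v_n)$ with $L(u_n,v_n)=y$ and $T(f(u_n),g(v_n))\to\alpha$. Since $0$ is the identity of the t-conorm $L$ and $L$ is increasing in each place, each coordinate satisfies $u_n=L(u_n,0)\leq L(u_n,v_n)=y$ and likewise $v_n\leq y$, so the sequence lives in the compact square $[0,y]^2$. Passing to a subsequence I may assume $u_n\to u_*$ and $v_n\to v_*$, and continuity of $L$ gives $L(u_*,v_*)=y$.

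Next I would stretch. The map $s\mapsto L(u_*+s,\,v_*+s)$ is continuous, equals $y$ at $s=0$, and tends to $L(\infty,\infty)=\infty$ as $s\to\infty$ (here $\infty$ is absorbing, since $L(\infty,\infty)\geq L(\infty,0)=\infty$). Because $y<x<\infty$, the intermediate value theorem produces some $s^*>0$ with $L(u_*+s^*,\,v_*+s^*)=x$. Setting $u_x=u_*+s^*$ and $v_x=v_*+s^*$, I obtain $L(u_x,v_x)=x$ together with the crucial \emph{strict} inequalities $u_x>u_*$ and $v_x>v_*$.

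Finally, since $u_n\to u_*<u_x$ and $v_n\to v_*<v_x$, for all sufficiently large $n$ one has $u_n\leq u_x$ and $v_n\leq v_x$; monotonicity of $f$, $g$, and of $T$ in each place then yields $T(f(u_x),g(v_x))\geq T(f(u_n),g(v_n))$ for every such $n$, and letting $n\to\infty$ gives $T(f(u_x),g(v_x))\geq\alpha=\tauTL(f,g)(y)$, as required. I expect the main obstacle to be precisely the combination of a possibly non-attained supremum with a merely monotone $T$; the strict stretch of the previous step is what circumvents it, replacing a continuity argument by one inequality. I note that the argument in fact invokes only the continuity of $L$ and the identity/monotonicity axioms of the t-conorm, the Archimedean hypothesis serving to fix the ambient setting in which this lemma is applied.
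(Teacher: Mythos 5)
Your proof is correct, and although it shares the paper's overall skeleton (a maximizing sequence for the supremum at $y$, compactness of the level set, a stretch onto the level set of $x$, and a closing appeal to monotonicity of $f$, $g$ and $T$), its technical core is genuinely different and in two respects cleaner. The paper first invokes the classification of continuous Archimedean t-conorms to reduce to $L=+$ (with $W^*$ ``similar''), then, after extracting a convergent subsequence $(u_{n_k},v_{n_k})\to(u_0,v_0)$ on the level set of $y$, additionally arranges $u_{n_k}$ to be monotone (with a WLOG case split); it keeps $u_x=u_0$ \emph{fixed} and puts the entire stretch into the second coordinate, $v_x=x-u_0>v_0$, using $u_{n_k}\nearrow u_0$ to get $f(u_{n_k})\leq f(u_x)$ despite the absence of a strict inequality in that coordinate. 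Your symmetric strict stretch --- solving $L(u_*+s,v_*+s)=x$ by the intermediate value theorem so that \emph{both} coordinates strictly exceed the limit pair --- renders the monotone-subsequence device and the case split unnecessary: plain convergence already yields $u_n<u_x$ and $v_n<v_x$ for large $n$, after which one monotonicity inequality finishes the argument exactly as you say. Moreover, your closing observation is accurate and worth keeping: your argument uses only continuity of $L$, its monotonicity, and the identity $0$ (for the bound $u_n=L(u_n,0)\leq y$ and for the diagonal $s\mapsto L(u_*+s,v_*+s)$ running from $y$ up to $\infty$), so it proves the lemma for \emph{every} continuous t-conorm, Archimedean or not, whereas the paper's reduction to $+$ and $W^*$ is tied to the Archimedean classification. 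One trifle: you carry out the IVT step only for $y<x<\infty$; for $x=\infty$ the claim is trivial (take $u_x=v_x=\infty$, so $T(f(u_x),g(v_x))=T(1,1)=1$), so nothing is lost, but since the lemma quantifies over all $x>y$ in $[0,\infty]$ you should say so in one line.
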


\begin{proof} Since continuous Archimedean t-conorms $L$ on $[0,\infty]$ are isomorphic to either the addition $+$ or the t-conorm $W^*$, it suffices to prove it for addition $+$ and the t-conorm $W^*$. Here, we provide the proof only for addition $+$, as the proof for the t-conorm $W^*$ follows a similar argument.



Since $$\tauT(f,g)(y)=\sup_{u+v=y}T(f(u),g(v)),$$
for each $n\in\bbN$, there exists a pair $(u_n,v_n)$ with $u_n+v_n=y$ such that
$$\tauT(f,g)(y)-\frac{1}{n}<T(f(u_n),g(v_n)).$$
Since $\{(u,v)|u+v=y\}$ is bounded and closed in $[0,\infty]^2$, by compactness, there exists a convergent subsequence $\{u_{n_k},v_{n_k}\}_{k\in\bbN}$  with a limit $(u_0,v_0)$ and $u_0+v_0=y$.  Furthermore, we may assume, without loss of generality, that $u_{n_k}\leq u_{n_{k+1}}$ for all $k\in\bbN$ (the other case, $u_{n_k}\geq u_{n_{k+1}}$ for all $k\in\bbN$ is analogous).

Now for a fixed $x>y$, let $u_x=u_0$ and $v_x=x-u_0$.
Firstly, since $u_{n_k}$ is non-decreasing and $\lim\limits_{k\ra\infty}u_{n_k}=u_0=u_x$, we have $$f(u_{n_k})\leq f(u_x)$$ for all $k\in\bbN$.
Secondly,  because $v_{n_k}=y-u_{n_k}$ is non-increasing,  $\lim\limits_{k\ra\infty}v_{n_k}=v_0$, and $v_x=x-u_0>y-u_0=v_0$, there exists $K\in\bbN$ such that  $v_{n_k}<v_x$ for all $k>K$. It implies that $$g(v_{n_k})\leq g(v_x)$$ as $g$ is non-decreasing.  Therefore, by the monotonicity of $T$, $$T(f(u_x),g(v_x))\geq T(f(u_{n_k}),g(v_{n_k}))$$ for all $k>K$. Taking the supremum over $k>N$, we obtain $$T(f(u_x),g(v_x))\geq \sup_{k>K}T(f(u_{n_k}),g(v_{n_k}))={\tauT}(f,g)(y).$$ 
\end{proof}

\begin{prop} \label{suff proo}
Given a continuous t-conorm $L$ on $[0,\infty]$ and a t-norm $T$ on $[0,1]$, if $L$ is Archimedean and $T$ is weakly left continuous, or $L$ is non-Archimedean and $T$ is left continuous, then $\tauTL$ is a triangle function on $\Delp$.
\end{prop}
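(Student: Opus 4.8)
The plan is to verify directly the four defining properties of a triangle function for $\tauTL$, namely that it maps $\Delp\times\Delp$ into $\Delp$ and is commutative, associative, and has $\vep_0$ as identity (monotonicity in each place and the value $1$ at $\infty$ being already supplied by Proposition \ref{tauTL A}). Commutativity is immediate from the commutativity of $T$ and $L$, since reindexing $L(u,v)=x$ as $L(v,u)=x$ turns $T(f(u),g(v))$ into $T(g(v),f(u))$. For $\tauTL(f,g)(0)=0$ one uses that a t-conorm satisfies $L(u,v)\ge\max\{u,v\}$, so $L(u,v)=0$ forces $u=v=0$ and hence $T(f(0),g(0))=T(0,0)=0$. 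The identity law $\tauTL(f,\vep_0)=f$ follows by a direct computation: for $x\in\,]0,\infty[$ every pair with $L(u,v)=x$ satisfies $u\le x$, so the nonzero contributions (those with $v>0$) equal $f(u)\le f(x)$, while solving $L(u,v)=x$ as $v\to0^+$ forces $u\to x$ and, by continuity of $L$ together with left-continuity of $f$, recovers $f(x)$ as the supremum. Thus the two genuinely substantial points are (I) well-definedness, i.e. left-continuity of $\tauTL(f,g)$ on $]0,\infty[$, and (II) associativity.

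For (I), fix $x\in\,]0,\infty[$; since $\tauTL(f,g)$ is increasing, left-continuity at $x$ amounts to the inequality $T(f(u),g(v))\le\sup_{L(u',v')<x}T(f(u'),g(v'))$ for every pair with $L(u,v)=x$, and we may assume $u,v>0$ (otherwise the left-hand side vanishes). When $L$ is Archimedean it is isomorphic to $+$ or to $W^*$, so decreasing a single coordinate strictly decreases $L$ as long as the value stays finite; combined with the weak left continuity of $T$ and the left-continuity of $f,g$, this lets us approximate $T(f(u),g(v))$ by admissible pairs $(u',v)$ or $(u,v')$ with $L(u',v)<x$ or $L(u,v')<x$. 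When $L$ is non-Archimedean a single-coordinate decrease need not lower $L$, so we instead use that $L$ satisfies $(LCS)$ (guaranteed by condition (a) of the sufficiency hypotheses) to decrease both coordinates simultaneously, which matches the stronger left-continuity of $T$ available in that case. In either situation the supremum over pairs with value strictly below $x$ dominates $T(f(u),g(v))$, giving left-continuity.

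For (II) I would show that both associated composites equal the common ``triple'' supremum
\[
\Theta(x)=\sup\{\,T(T(f(u),g(v)),h(z))\mid L(L(u,v),z)=x\,\},
\]
which by associativity of $T$ and $L$ is symmetric under reassociation. The inequalities $\tauTL(\tauTL(f,g),h)(x)\ge\Theta(x)$ and $\tauTL(f,\tauTL(g,h))(x)\ge\Theta(x)$ are routine, since $\tauTL(f,g)(L(u,v))\ge T(f(u),g(v))$ and $T$ is increasing. The reverse inequalities are the crux. If $T$ is left-continuous (the non-Archimedean case), then $T$ preserves suprema in each variable, so the inner supremum defining $\tauTL(f,g)$ can be pulled through $T$ and the double supremum collapses, via associativity of $T$ and $L$, exactly onto $\Theta(x)$; this is the classical argument of Theorem 7.2.4 in \cite{Schweizer1983}. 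If $T$ is only weakly left continuous (the Archimedean case) this collapse is no longer valid, and here Lemma \ref{Lem A} is essential. Having already established (I), I would rewrite $\tauTL(\tauTL(f,g),h)(x)=\sup_{L(w,z)<x}T(\tauTL(f,g)(w),h(z))$ using Lemma \ref{tau eq}; then for each such $(w,z)$ with $w>0$, continuity of $L(\cdot,z)$ and the intermediate value theorem produce $x'>w$ with $L(x',z)=x$, and Lemma \ref{Lem A} produces a pair $(u,v)$ with $L(u,v)=x'$ and $T(f(u),g(v))\ge\tauTL(f,g)(w)$. The triple $(u,v,z)$ then satisfies $L(L(u,v),z)=x$, so $\Theta(x)\ge T(T(f(u),g(v)),h(z))\ge T(\tauTL(f,g)(w),h(z))$; taking the supremum over $(w,z)$ yields $\Theta(x)\ge\tauTL(\tauTL(f,g),h)(x)$. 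The symmetric argument, applying Lemma \ref{Lem A} to $(g,h)$, gives $\Theta(x)\ge\tauTL(f,\tauTL(g,h))(x)$, and associativity follows.

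The main obstacle is precisely the reverse inequality in (II) in the Archimedean, weakly-left-continuous case: without sup-preservation one cannot commute $T$ past the inner supremum, so the termwise estimate breaks down. The entire argument then hinges on Lemma \ref{Lem A} to realize the inner value $\tauTL(f,g)(w)$ by a single honest pair at the adjusted level $x'$, which in turn relies on the continuity and Archimedean structure of $L$ to solve $L(x',z)=x$ with $x'>w$.
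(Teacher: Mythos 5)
Your proposal is correct and takes essentially the same route as the paper: the same four-part verification (closure in $\Delp$ via weak left continuity of $T$ together with left continuity of $f,g$, routine commutativity and identity, and associativity obtained by first rewriting $\tauTL(\tauTL(f,g),h)(x)$ as a supremum over pairs with $L(w,z)<x$ in the spirit of Lemma \ref{tau eq} and then invoking Lemma \ref{Lem A} to realize the inner supremum $\tauTL(f,g)(w)$ by a single pair at an inflated level with $L(x',z)=x$), while the non-Archimedean case is the classical sup-preservation argument of Theorem 7.2.4 under $(LCS)$, exactly what the paper handles by citation in its Part II. The only cosmetic differences are that you run the associativity step for a general continuous Archimedean $L$ via the intermediate value theorem where the paper first reduces to $L=+$ (taking $w=x-v$), and that you explicitly note the need to import $(LCS)$ for the non-Archimedean closure step, a point the paper uses implicitly even though it is absent from the proposition's literal hypotheses.
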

\begin{proof} We divide the proof into two parts based on whether the t-conorm $L$ satisfies the Archimedean property.

\textbf{Part I}: Let $L$ be Archimedean and $T$  be weakly left continuous. Since continuous Archimedean t-conorms $L$ on $[0,\infty]$ are isomorphic to either the addition $+$ or the t-conorm $W^*$, it suffices to prove it for addition $+$ and the t-conorm $W^*$. Here, we provide the proof only for addition $+$, as the proof for the t-conorm $W^*$ follows a similar argument. 

We show that $\tauTL$ is a triangle function in four steps:

\emph{Step 1}: $\tauTL$ is a binary operation on $\Delp$. It is established by Ying in \cite{Ying1992} (   see item (2) in Section 3). Here we write the proof for convenience of readers.

It suffices to show that, for all $f,g\in\Delp$, ${\tauT}(f,g)$ is left continuous on $]0,\infty[$. 
 That is, at any point $x_0\in]0,\infty[$, for all $\vep>0$, there is $y<x_0$ such that 
    \[
     {\tauT}(f,g)(y)>{\tauT}(f,g)(x_0)-\epsilon.
    \]
    By the definition of ${\tauT}$, there is $u_0,v_0\in[0,\infty[$ with $u+v=x_0$ such that $$T(f(u_0),g(v_0))>{\tauT}(f,g)(x_0)-\frac{\epsilon}{2}.$$

    Since $T$ is weakly left continuous, there exist $a,b\in [0,1]$ with $a<f(u_0)$ and $b\leq g(v_0)$ (or symmetrically $a\leq f(u_0)$, and $b<g(v_0)$) such that
     $$T(a,b)>T(f(u_0),g(v_0))-\frac{\epsilon}{2}.$$

    If $a\leq f(u_0)$ and $b<g(v_0)$, then there exists $v'<v_0$ such that $b<g(v')$ since $g$ is left continuous at the point $v_0$. Let $y=u_0+v'$, then  $y<u_0+v_0=x_0$, and consequently, it holds that  
    \[
        {\tauT}(f,g)(y)\geq T(f(u_0),g(v')) \geq T(a,b) > T(f(u_0),g(v_0))-\frac{\epsilon}{2}>{\tauT}(f,g)(x_0)-\epsilon.
    \]
 
 If $a<f(u_0)$ and $b\leq g(v_0)$, then a symmetric argument yields the same inequality. In both cases, the desired inequality holds, proving left continuity.

\emph{Step 2}: ${\tauT}$ is commutative. For all $f,g\in\Delp$ and $x\in[0,\infty]$, the commutativity of $T$ and $L$ implies
\[
{\tauT}(f,g)(x)=\sup_{u+v=x}T(f(u),g(v)) =\sup_{v+u=x}T(g(v),f(u))={\tauT}(g,f)(x).
\]
Thus, ${\tauT}$ is commutative. 

\emph{Step 3}: $\vep_0$ is the identity of ${\tauT}$.
 For all $f\in\Delp$ and $x\in]0,\infty[$, 
 $${\tauT}(f,{\vep_0})(x)=\sup\limits_{u+v=x}T(f(u),{\vep_0}(v)).$$
 Since $\vep_0(0)$ and ${\vep_0}(v)=1$ if $v>0$, the supremum is attained when $v>0$. Thus, 
 $${\tauT}(f,{\vep_0})(x)=\sup\limits_{0<u<x}f(u)=f(x)$$   since $f$ is left continuous. Hence, ${\tauT}(f,{\vep_0})=f$.  

\emph{Step 4}: ${\tauT}$ is associative.
For all $f,g,h\in\Delta^+$, we show 
     \[
         {\tauT}({\tauT}(f,g),h)={\tauT}(f,{\tauT}(g,h)).
     \]
For any $x\in[0,\infty]$, the left side is 
\begin{align*}
{\tauT}({\tauT}(f,g),h)(x)
&=\sup_{u+v=x}T({\tauT}(f,g)(u),h(v))\\
&=\sup_{u+v=x}T(\sup_{r+s=u}T(f(r),g(s)),h(v)).
    \end{align*}
Now we check the equality
\[ \sup_{u+v=x}T(\sup_{r+s=u}T(f(r),g(s)),h(v))
=\sup_{a+b+c=x}T(T(f(a),g(b)),h(c)).
\]
On one hand, we show the $\geq$. For any $a,b,c\in[0,\infty]$ with $a+b+c=x$, let $u=a+b$ and $v=c$. Then $u+v=x$, and we have 
$$\sup_{r+s=u}T(f(r),g(s))\geq T(f(a),g(b)).$$
By the monotonicity of $T$,
$$T(\sup_{r+s=u}T(f(r),g(s)),h(v))\geq T(T(f(a),g(b)),h(c)).$$
Taking the supremum over all such $(a,b,c)$ yields $$\sup_{u+v=x}T(\sup_{r+s=u}T(f(r),g(s)),h(v))\geq
\sup_{a+b+c=x}T(T(f(a),g(b)),h(c)).$$
On the other hand, we show the $\leq$. We know that ${\tauT}({\tauT}(f,g),h)$ is left continuous by the step 1. Our strategy is to show that for all $y<x$,
     \[ \sup_{u+v=y}T(\sup_{r+s=u}T(f(r),g(s)),h(v))\leq\sup_{a+b+c=x}T(T(f(a),g(b)),h(c)).
     \]
     Indeed, if this holds, then by left continuity at $x$, the limit as $y\ra x^-$ of the left-hand side equals its value at $x$, giving the desired inequality.

     Now, fix $y<x$ and take any $u,v\in[0,\infty]$ with $u+v=y$. Let $w=x-v$, then $w>u$ and $w+v=x$. By Lemma \ref{Lem A}, there exists a pair $(r',s')$ with $r'+s'=w$ such that
      $$\sup_{r+s=u}T(f(r),g(s))={\tauT}(f,g)(u)
      \leq T(f(r'),g(s')).$$
      Then by the monotonicity of $T$, 
$$T\left(\sup_{r+s=u}T(f(r),g(s)),h(v)\right)\leq T\left(T(f(r'),g(s')),h(v)\right).$$
But since $r'+s'+v=w+v=x$, we have $$T(T(f(r'),g(s')),h(v))\leq\sup_{a+b+c=x}T(T(f(a),g(b)),h(c)).$$
Combining these inequalities, we obtain 
$$T\left(\sup_{r+s=u}T(f(r),g(s)),h(v)\right)
\leq \sup_{a+b+c=x}T(T(f(a),g(b)),h(c)).$$
Taking the supremum over all pairs $(u,v)$ with $u+v=y$ gives the desired inequality for each $y<x$.

Therefore, we conclude that
$${\tauT}({\tauT}(f,g),h)(x)=\sup_{a+b+c=x}T(T(f(a),g(b)),h(c)).$$
A similar argument shows that
$${\tauT}(f,{\tauT}(g,h))(x)=\sup_{a+b+c=x}T(f(a),T(g(b),h(c))).$$
By the associativity of $T$, the right-hand sides of these two expressions are equal. Therefore,
$${\tauT}({\tauT}(f,g),h)={\tauT}(f,{\tauT}(g,h)),$$
completing the proof of associativity.


\textbf{Part II}: Let $L$ be non-Archimedean and $T$ be left continuous. As pointed out by Ying in \cite{Ying1992},  the proof of Theorem 7.2.4 in \cite{Schweizer1983} is also valid if the condition $(LS)$ is weakened as the condition $(LCS)$. Thus, $\tauTL$ is a triangle function on $\Delp$ if the t-conorm $L$ is Archimedean and the t-norm $T$ is left continuous.
 \end{proof}

Note that the mapping $p\mapsto V_p$ embeds the monoid $([0,1],T,1)$ into the monoid $(\Delp,\tauTL,\vep_0)$, and the mapping $x\mapsto\vep_x$ embeds the monoid $([0,\infty],L,0)$ into $(\Delp,\tauTL,\vep_0)$. The following corollary is therefore natural:

\begin{cor} {\rm (\cite{Ying1992})}
Given $T\in\CT$ and $L\in\CL$, the map $\tauTL$ is a continuous triangle function on $(\Delp,d_L)$ if and only if $T$ is a continuous t-norm on $[0,1]$ and $L$ is a continuous t-conorm on $[0,\infty]$ satisfying the condition $(LCS)$. 
\end{cor}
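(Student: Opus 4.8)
The plan is to derive the corollary from Theorem~\ref{main result} together with an analysis of how $d_L$-continuity of $\tauTL$ interacts with the two monoid embeddings $p\mapsto V_p$ and $x\mapsto\vep_x$ recorded just before the statement. Here $d_L$ is the modified L\'evy metric, under which $(\Delp,d_L)$ is compact and $d_L$-convergence coincides with weak convergence of distance distribution functions (i.e.\ pointwise convergence at continuity points of the limit). Since each embedding is a continuous injection from a compact space ($[0,1]$, resp.\ $[0,\infty]$) into the Hausdorff space $(\Delp,d_L)$, it is automatically a homeomorphism onto its image; this is the device that transfers continuity back and forth between $\tauTL$ and the operations $T$, $L$. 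Throughout I use that a continuous t-norm is in particular left continuous, so the continuity hypotheses are strictly stronger than conditions (a)--(c) of Theorem~\ref{main result}.

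For necessity, suppose $\tauTL$ is a continuous triangle function. Being a triangle function, Theorem~\ref{main result} already forces $L$ to be a continuous t-conorm satisfying $(LCS)$ and $T$ to be a weakly left continuous t-norm; in particular the required conditions on $L$ are all present, with nothing left to establish on that side. It remains only to upgrade $T$ from weakly left continuous to continuous. For this I invoke the identity $\tauTL(V_p,V_q)=V_{T(p,q)}$ proved in part (b). Writing $\Phi\colon p\mapsto V_p$, this says $T=\Phi^{-1}\circ\tauTL\circ(\Phi\times\Phi)$ on $[0,1]^2$, where $\Phi$ is a homeomorphism onto its image; as a composite of continuous maps, $T$ is then continuous on $[0,1]^2$.

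For sufficiency, assume $T$ is a continuous t-norm and $L$ a continuous t-conorm satisfying $(LCS)$. Continuity of $T$ gives left continuity, hence weak left continuity, so conditions (a)--(c) of Theorem~\ref{main result} hold regardless of whether $L$ is Archimedean; thus $\tauTL$ is a triangle function and only its $d_L$-continuity remains. As $(\Delp,d_L)$ is compact metric, it suffices to prove sequential continuity: if $f_n\to f$ and $g_n\to g$ in $d_L$, then $\tauTL(f_n,g_n)\to\tauTL(f,g)$. I would fix a continuity point $x$ of the limit and estimate $\sup_{L(u,v)=x}T(f_n(u),g_n(v))$ against $\sup_{L(u,v)=x}T(f(u),g(v))$, exploiting three ingredients: uniform continuity of $T$ on the compact square $[0,1]^2$; continuity of $L$, so that each level set $\{L(u,v)=x\}$ is compact and varies continuously in $x$; and the fact that $d_L$-convergence is pointwise convergence of $f_n,g_n$ at continuity points of the limits.

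The main obstacle is precisely this continuity estimate. The difficulty is that $f_n,g_n$ converge only weakly, so pointwise control of $f_n(u)$, $g_n(v)$ can fail at the (countably many) jump points of $f$ and $g$ -- exactly the places where the defining supremum tends to be attained. My strategy to overcome this is to use the monotonicity and left continuity of $f,g$ to sandwich values near a jump between nearby continuity points, and to use compactness of $\{L(u,v)=x\}$ to replace the supremum by a maximum at a controllable pair $(u_x,v_x)$; feeding these through the uniform continuity of $T$ then yields, for each $\epsilon>0$ and all large $n$, two-sided $\epsilon$-estimates of $\tauTL(f_n,g_n)(x)$ in terms of $\tauTL(f,g)(x)$ at every continuity point $x$, which is enough to conclude $d_L$-convergence and hence continuity of $\tauTL$.
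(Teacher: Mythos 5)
Your necessity argument is correct and is exactly the route the paper itself indicates: Theorem~\ref{main result} applied to the triangle function $\tauTL$ yields conditions (a)--(c), which already contain everything the corollary asserts about $L$, and the identity $\tauTL(V_p,V_q)=V_{T(p,q)}$ established in part (b), combined with the observation that $p\mapsto V_p$ is a continuous injection from the compact space $[0,1]$ into the Hausdorff space $(\Delp,d_L)$ and hence a homeomorphism onto its image, upgrades $T$ from weakly left continuous to continuous. This is precisely the content of the paper's closing remark about the two monoid embeddings (the paper goes no further and defers to \cite{Ying1992}). Your reduction of the sufficiency direction to ``triangle function plus $d_L$-continuity'' is also correct: continuity of $T$ implies left continuity, so Theorem~\ref{main result} applies whether or not $L$ is Archimedean.

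The genuine gap is the $d_L$-continuity estimate itself, which you flag as the main obstacle but do not carry out, and whose sketched mechanism contains a step that fails as stated. First, ``use compactness of $\{(u,v)\mid L(u,v)=x\}$ to replace the supremum by a maximum at a controllable pair $(u_x,v_x)$'' is unavailable: $f$ and $g$ are merely left continuous, so $(u,v)\mapsto T(f(u),g(v))$ fails upper semicontinuity at jump points, which is exactly what attainment of a maximum on a compact set would require. Concretely, take $T=M$, $L=+$, $g=\vep_1$, and $f\in\Delp$ defined by $f(u)=\min\{u,1\}$ for $u<\infty$ and $f(\infty)=1$; then $\sup_{u+v=2}T(f(u),g(v))=\sup_{u<1}u=1$ is attained nowhere on the level set. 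This particular defect is repairable with $\epsilon$-approximate maximizers, but it removes the anchor of your sandwich argument. Second, and more seriously, the pointwise sandwich near jumps cannot work when $L$ is non-Archimedean: for $L=M^*=\max$ the level set $\{L(u,v)=x\}$ contains the entire segment $\{x\}\times[0,x]$, so if $x$ is a jump point of $f$, the defining supremum is governed by values $f_n(x)$ on a whole segment lying on a single vertical line, where weak convergence gives no control and no perturbation \emph{within} the level set escapes the jump line. The missing idea is to perturb the parameter $x$ itself rather than the pair $(u,v)$ --- i.e., to work with $\{L(u,v)<x\}$ via the identity of Lemma~\ref{tau eq} (which is legitimate here, since $\tauTL$ is already known to be a triangle function) and a monotone approximation in $x$; this is the actual substance of Ying's proof \cite{Ying1992}, paralleling Theorem 7.2.8 of \cite{Schweizer1983} for the $(LS)$ case. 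As it stands, your sufficiency half establishes that $\tauTL$ is a triangle function but not that it is continuous on $(\Delp,d_L)$.
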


\section*{Acknowledgment}
The authors acknowledge the support of National Natural Science Foundation of China (12171342).

\end{document}